\title{Non-abelian extensions of minimal rotations}
\author{Ulrich Hab\"ock}
\author{Vyacheslav Kulagin}
\email{\{ulrich.haboeck@univie.ac.at\}\{skulagin@rambler.ru\}}
\address{Ulrich Hab\"ock, Institute of discrete Mathematics and Geometry, TU Vienna, Wiedner Hauptstrasse 8, A1040-Wien, Austria.}
\address{Vyacheslav Kulagin, Institute for Low Temperature Physics and Engineering, 47 Lenin Ave., Kharkov 61103, Ukraine.}
\date{April 5, 2008}
\subjclass[2000]{37B05, 37B20}
\keywords{}
\thanks{The first author was partially supported by the FWF research project S9612-N13.}
\thanks{The second author was supported by grant INTAS YSF-05-109-5200.}
\newtheorem{theo}{Theorem}[section]
\newtheorem{cor}[theo]{Corollary}
\newtheorem{lem}[theo]{Lemma}
\newtheorem{prop}[theo]{Proposition}
\theoremstyle{definition}
\newtheorem{defi}[theo]{Definition}
\newtheorem{open question}[theo]{Open question}
\theoremstyle{remark}
\newtheorem{rem}[theo]{Remark}
\newcommand{\Z}{\mathbb Z}
\newcommand{\R}{\mathbb R}
\newcommand{\mc}{\mathcal}
\newcommand{\mf}{\mathfrak}
\DeclareMathOperator{\Aut}{\it{Aut}}
\DeclareMathOperator{\Ad}{\it{Ad}}
\DeclareMathOperator{\ad}{\it{ad}}
\DeclareMathOperator{\GL}{\it{GL}}
\begin{document}\allowdisplaybreaks\frenchspacing
\begin{abstract}
We consider continuous extensions of minimal rotations on a locally connected compact group X by cocycles taking values in locally compact Lie groups and prove regularity (i.e. the existence of orbit closures which project onto the whole basis X) in certain special situations beyond the nilpotent case \cite{nilpotent}.
We further discuss an open question on cocycles acting on homogeneous spaces which seems to be the missing key for a general regularity theorem.
\end{abstract}

\maketitle

\section{Introduction}

Let $T$ be a minimal homeomorphism of a compact metric space $X$ and $G$ be a locally compact metrisable group.
Any continuous function $f:X\longrightarrow G$ defines an extension $T_f$ of $T$ via the equation
\begin{equation*}
T_f^n (x,g) = \big(Tx, f(n,x)\cdot g\big),
\end{equation*}
for every $x\in X$, $g\in G$ and $n\in\Z$, where $f(n,x)$ is the \emph{cocycle} generated by $f$, i.e.
\begin{equation*}
f(n,x)=
\begin{cases}
f(T^{n-1} x)\cdots f(T x)\cdot f(x) & \textup{if}\enspace n\geq 1, \\
e & \textup{if}\enspace n=0, \\
f(-n,T^n x)^{-1} & \textup{if}\enspace n < 0,
\end{cases}
\end{equation*}
with $e$ being the identity in $G$.
In this paper we investigate the problem of regularity of such an extension, i.e.
to ask whether there exist orbit closures which project onto the whole basis $X$ (such orbit closures are called \emph{surjective}). 
It is known that for arbitrary base transformations $T$ the existence of such orbit
closures might fail, see \cite{Lemanczyk} (this corresponds to the situation of type $III_0$ cocycles in the classical abelian case).
However if $T$ is a minimal rotation on a locally connected group $X$ then every topologically recurrent cocycle with values in a nilpotent locally compact group $G$ does admit surjective orbit closures and the entire product space $X\times G$ (or in geometric terminology the trivial
$G$-bundle) decomposes into such orbit closures (which are closed sub-bundles of $X\times G$), see \cite{nilpotent}. 
The essential idea involved goes back to G. Atkinson \cite{Atkinson} who proved regularity for
the case $G=\R^d$, and was generalised later by M. Lema\'nczyk and M. Mentzen \cite{Lemanczyk,Mentzen} to general locally compact abelian groups.
Before the present paper no regularity results beyond the nilpotent case where known, and our aim here is to develop methods which work in more general situations.


The difficulty in treating non-abelian (non-compact) extensions is that the (local) essential ranges
introduced in \cite{nilpotent},
\begin{equation}\label{essential range}
E_x(f)= \bigcap_{\mathcal U, V} \bigcup_{n\in\mathbb Z} \mathcal U \cap T^{-n}\mathcal U \cap \big\{y: f(n,y) \in V\cdot g \big\} ,
\end{equation}
where the intersection is taken over all open neighbourhoods $\mathcal U$ of $x$ and $V$ of the identity in $G$, alter along the orbits by conjugation:
\begin{equation*}
E_{T^n x}(f) = f(n,x) \cdot E_x(f) \cdot f(n,x)^{-1},
\end{equation*}
for all $x\in X$ and $n\in\Z$.
Furthermore, unlike in the abelian case, these essential ranges might not be subgroups of $G$ for points outside a dense $G_\delta$-set in $X$.
In what follows we show that understanding the behaviour of the identity component of $E_x$ under conjugation of the cocycle is crucial for regularity:
if $x$ is any point in $X$ and if the mapping
\[
H_{T^n x} = f(n,x) \cdot E_x^0(f) \cdot f(n,x)^{-1},
\]
which is only defined along the orbit of $x$ extends \emph{continuously} to the entire space $X$, then the transformation $T_f$ admits such a decomposition into surjective orbit closures. 
On the one hand this improves the key tool used in \cite{nilpotent}, and secondly it puts our attention more on the behaviour of these identity components under conjugation. 
This approach recalls the conjugacy problem of stabilizers for general Borel actions in S.G. Dani's paper \cite{Dani}, and in line with \cite{Dani} we show that the identity components of $E_x$ are conjugate on a dense $G_\delta$-set in $X$.
In some special situations we are able to prove that the identity components $E_x^0$
depend continuously on $x$ which implies regularity of the cocycle.
However, in general this issue is still open and is closely related with the following open question: 

\emph{Let $T_f$ be a continuous $G$-extension of a minimal group rotation $T$, and $H$ be any closed subgroup of $G$.
Suppose $C\subseteq X\times G/H$ is a compact $T_f$-orbit closure which projects injectively onto a dense $G_\delta$-subset of $X$ (which means, in particular, that $(C, T_f)$ is an almost one-to-one extension of the rotation\footnote{for the definition see \cite{Glasner}.}).
Is it true that then the projection $\pi:C\longrightarrow X$ is one-to-one on the whole set $C$?} 

This question was pointed out before in \cite{nilpotent}, but as its answer is positive for nilpotent groups $G$ we did not realise its importance at that time.

The paper is organised as follows:
first of all we review basic facts on cocycles taken from \cite{nilpotent}.
In Section \ref{s:Atkinson lemma} we prove the generalised Atkinson Lemma for general locally compact groups $G$ and draw some simple conclusions.
In Section \ref{s:Lie groups} we restrict our considerations to Lie groups, and adapt the results from \cite{Dani} to our setting in order to investigate the behaviour of the identity components $E_x^0$ under conjugation by the cocycle; we further discuss the importance of the above mentioned open question.
Finally, in the last section we show the existence of surjective orbit closures in the situation of semi-direct products $G=\R^d\rtimes \R$ where the action of $\R$ on $\R^d$ has no eigenvalue equal to one.
The proof presented there is alternative to the approach in Section \ref{s:Lie groups}.
However, it does not give a clearer picture of the general case; it is rather the simple group structure that allows us to reduce to situations that are easily understood.

It is worth to note that very likely all these results can be extended for a larger class of base transformations as is done in \cite{furstenberg} and \cite{furstenberg nilpotent}, but we will not focus on that issue in this paper.
The authors would like to thank Manfred Einsiedler and Klaus Schmidt, whose conversation has been very helpful.

\section{Basic facts and notions}\label{s:basic facts}

Let $T$ be a minimal homeomorphism of a compact metric space $X$ and $G$ a locally compact second countable (l.c.s.c.) group.
A cocycle $f(n,x)$ is said to be (topologically) \emph{recurrent} if for every open neighbourhood
$U$ of the identity in $G$ and every open set $\mc U\subseteq X$ there is an integer $n\neq 0$ so that
\begin{equation*}
T^{-n}\mc U \cap \mc U \cap \big\{x: f(n,x)\in U\big\}\neq\varnothing.
\end{equation*}
This property is equivalent to say $T_f$ being topologically conservative (or regionally recurrent in the terminology of \cite{GH}), i.e. for every open set $\mc O\subseteq X\times G$ there is an integer $n\neq 0$ so that $T_f^n(\mc O)\cap\mc O\neq\varnothing$.

The local essential range $E_x(f)$ defined by \eqref{essential range} is a closed subset of $G$ and it is symmetric, i.e. $E_x^{-1}(f) = E_x(f)$.
For every $x$ in $X$ the set
\begin{equation}\label{e:P_x}
P_x(f)=\Big\{ g\in G: (x,g) \in\overline{T_f^\Z (x,e)} \Big\}
\end{equation}
is a closed sub-semigroup of $G$.
We will simply write $E_x$ and $P_x$ whenever it is clear to which cocycle we refer.
It is shown in \cite[Proposition 1.7]{nilpotent} that the set
\begin{equation}\label{e:D(f)}
\mc D(f) =\big\{x\in X: E_x = P_x\big\}
\end{equation}
contains a dense $G_\delta$-set, thus it is non-meager in $X$.
Thus for every $x$ in $\mc D(f)$ the set $E_x$ is a closed symmetric sub-semigroup and hence a subgroup of $G$.

Recall that the essential ranges as well as the sub-semigroups $P_x$ satisfy the equation
\begin{equation}\label{e:essential ranges}
E_{T^n x} = f(n,x) \cdot E_x \cdot f(n,x)^{-1},
\end{equation}
for all $x\in X$ and $n\in\Z$, thus they are conjugate along orbits of $T$ \cite[Lemma 1.3]{nilpotent}.
The map $x\mapsto E_x$ is semi-continuous in the sense that if $x_n\rightarrow x$ and $g_n\in E_{x_n}$ converge to $g$, then $g\in E_x$.

If $H$ is a closed subgroup of $G$, then the action of $T_f$ (or its corresponding cocycle) on $X\times G/H$ is defined by setting
\[
T_f^n (x,g H) = \big(T^n x, f(n,x)\cdot g H\big).
\]
Any $T_f$-orbit closure in $X\times G/H$ is called \emph{surjective} if it projects onto $X$.
We shall make frequently use of the following lemma which is similar to \cite[Lemma 2.3]{nilpotent}.

\begin{lem}\label{l:surjective}
Let $C\subseteq X\times G/H$ be a $T_f$-invariant set which projects onto a non-meager set in $X$. 
Then there exists a compact set $K\subseteq G/H$ such that
$(X\times K)\cap C$ projects onto the whole set $X$.
\end{lem}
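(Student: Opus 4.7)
The plan is to combine a Baire-category argument on the fibres with the minimality of $T$ and the continuity of the cocycle. Since $G/H$ is locally compact and second countable, it is $\sigma$-compact: fix an exhaustion $G/H = \bigcup_{n\geq 1} K_n$ with each $K_n$ compact and $K_n \subseteq \mathrm{int}(K_{n+1})$. Set
\[
F_n = \pi\bigl((X\times K_n)\cap C\bigr),
\]
an increasing sequence of subsets of $X$ with $\bigcup_n F_n = \pi(C)$. Since $\pi(C)$ is non-meager and a countable union of meager sets is meager, some $F_{n_0}$ must be non-meager.

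Next, I would argue that this $F_{n_0}$ actually contains a non-empty open subset. Assuming $C$ is closed (which is the intended framework, e.g.\ when $C$ is an orbit closure of $T_f$), the intersection $(X\times K_{n_0})\cap C$ is a closed subset of the compact set $X\times K_{n_0}$, hence compact, so its projection $F_{n_0}$ is closed in $X$. A closed non-meager subset of the Baire space $X$ has non-empty interior, so there is a non-empty open $U \subseteq F_{n_0}$. By minimality of $T$ and compactness of $X$, finitely many translates $T^{-k_1}U,\ldots,T^{-k_m}U$ already cover $X$.

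Finally, I would use the $T_f$-invariance of $C$ to spread the fibre data over all of $X$. Given $x\in X$, pick $i$ with $T^{-k_i}x \in U \subseteq F_{n_0}$, so that some $gH \in K_{n_0}$ satisfies $(T^{-k_i}x,\, gH)\in C$. By invariance,
\[
(x,\, f(k_i, T^{-k_i}x)\cdot gH) \in C.
\]
Define
\[
K = \bigcup_{i=1}^{m} \bigl\{f(k_i, T^{-k_i}x) : x\in X\bigr\}\cdot K_{n_0}.
\]
Each factor $\{f(k_i, T^{-k_i}x): x\in X\}$ is the continuous image of the compact space $X$ in $G$, hence compact; the action map $G\times G/H \to G/H$ is continuous, so $K$ is a finite union of compact sets and therefore compact. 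By construction, for every $x\in X$ there exists $g'H\in K$ with $(x,g'H)\in C$, i.e.\ $(X\times K)\cap C$ projects onto $X$.

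The only delicate step is the passage from \emph{non-meager} to \emph{contains an open set} in the second paragraph. This is immediate when $C$ is closed, which appears to be the tacit framework of the lemma (orbit closures of $T_f$). Without closedness one would at best use a Baire-property hypothesis, which would yield coverage of a comeager set rather than all of $X$; so closedness (or an equivalent regularity assumption on $C$) is essentially what makes the finite covering in the last step produce the full base $X$.
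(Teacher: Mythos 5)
Your proof is correct and follows essentially the same route as the paper's: exhaust $G/H$ by compacta, apply Baire to the projections to find one with non-empty interior, and use minimality of $T$ to cover $X$ by finitely many translates; your explicit compact set $K$ built from the continuity of $f(k_i,\cdot)$ is just a more hands-on version of the paper's step of taking $\bigcup_{n=1}^N T_f^{-n}\big((X\times K_m)\cap C\big)$. The closedness of $C$ that you rightly flag is also used tacitly in the paper (its proof asserts the projections $K_n'$ are compact), and it is indeed satisfied in all applications, where $C$ is an orbit closure.
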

\begin{proof}
Choose a sequence $\{K_n\}_{n\geq 1}$ of compact subsets of $G/H$ such that $G/H = \bigcup_n K_n$.
Then the sets $K_n'=\pi_X\big((X\times K_n)\cap C\big)$, where $\pi_X$ is the projection onto $X$, are compact subsets of $X$ and their union
$\bigcup_{n\geq 1} K_n'$ is a non-meager set.
By Baire's category theorem there is an $m\geq 1$ such that $K_m'$ contains a non-empty open set $\mc U$ of $X$.
Since $T$ is minimal and $X$ is compact, $X=\bigcup_{n=1}^N T^{-n} (\mc U)$ for some $N\geq 1$ and
\[
\bigcup_{n=1}^N T_f^{-n} \big((X\times K_m) \cap C\big) = \bigcup_{n=1}^N T_f^{-n} (X\times K_m)  \cap C
\]
is a compact subset of $C$ that projects onto $X$.
\end{proof}

A cocycle $f$ is called \emph{regular} if its skew product transformation $T_f$  admits surjective orbit closures in $X\times G$.
By \cite[Theorem 2.1]{nilpotent} any surjective orbit closure $C$ is of the following form:
If we set
\[
H = \{ g\in G: C\cdot g^{-1} = C \},
\]
then $C/H$ is compact regarded as $T_f$-invariant subset of $X\times G/H$, and the restriction of $T_f$ to $C/H$ is minimal.
Moreover for every $x$ in $\mc D(f)$ the vertical section of $C$ consists of single coset of $H$ only:
there exists $g_x\in G$ such that
\[
C_x = \{g\in G: (x,g)\in C\} = g_x \cdot H.
\]
Thus the system $(C/H,T_f)$ is an almost one-to-one extension of $(X,T)$. It is
further shown that the map
\begin{equation*}
\gamma: \mc D(f)\longrightarrow G/H, \quad x\mapsto C_x = g_x \cdot H
\end{equation*}
is continuous and $E_x= g_x\cdot H \cdot g_x^{-1}$ for all $x\in\mc D(f)$ \cite[Theorem 2.2]{nilpotent}.

\begin{defi}
We call an orbit closure $C$ \emph{strongly regular} if it is surjective and \emph{every} vertical section $C_x$ as above consists of a single left coset of $H$.
A \emph{strongly regular cocycle} is a cocycle $f$  whose extension $T_f$ admits strongly regular orbit closures.
\end{defi}

\begin{rem}
It is shown in \cite[Theorem 3.1]{nilpotent} that every regular cocycle with values in a nilpotent group is strongly regular, but for general groups this issue is still open even for a rotation as a basis transformation $T$ (cf. the open question mentioned in the introduction).
\end{rem}

In other words a strongly regular orbit closure is a sub-bundle of $X\times G$. 
Note that then the entire product space (the trivial bundle) $X\times G$ decomposes into such $T_f$-invariant sub-bundles which are permuted via the right action of $G$ on $X\times G$ defined by $R_h (x,g)= (x,g\cdot h^{-1})$. 
For such orbit closures the above statements on $\gamma$ and $E_x$ remain to be true with $\mc D(f)$ replaced by $X$: for every $x$ in $X$ the vertical section $C_x=\big\{g\in G: (x,g)\in C\big\}$ consists of a single left coset of $H=\big\{g: R_g (C) = C\big\}$ and the mapping
\begin{equation*}
\gamma: X\longrightarrow G/H, \quad x\mapsto C_x = g_x \cdot H
\end{equation*}
is continuous on the whole set $X$.
It is easy to see that then $E_x= g_x \cdot H \cdot g_x^{-1}$ for every $x$ in $X$ (cf. the proof of \cite[Theorem 3.2]{nilpotent}).
Thus all essential ranges are subgroups conjugate to $H$, and if we identify $H^G$ the conjugacy class of $H$ with $G/N(H)$, where $N(H)$ is the normaliser of $H$, then
\[
\varphi: X\longrightarrow H^G, \quad x\mapsto E_x = g_x\cdot H \cdot g_x^{-1},
\]
is continuous.

Finally it should be noted that if $f$ is continuously cohomologous to a topological transitive cocycle taking values in a closed subgroup $H$ of $G$, then $f$ is strongly regular but not vice versa (if one does not allow discontinuities for the boundary function). 
More generally, if $b:X\longrightarrow G$ is continuous and the cocycle
\[
\tilde f(n, x) = b(T^n x) \cdot f(n, x) \cdot b(x)^{-1}
\]
is strongly regular, then $f$ is also strongly regular.


\section{The generalised Atkinson Lemma}\label{s:Atkinson lemma}

Let $\mc S(G)$ be the set of all closed subsets of $G$ equipped with the Fell topology (= projective limit of the Hausdorff topology on every compactum).
A basis for this topology is given by sets of the form
$\{S\in \mc S(G) : S\cap K\neq 0, S\cap O_i \neq 0 \text{ for } i=1,\ldots,k\}$,
where $K$ is any compact subset of $G$ and every $O_i$ is open.
It is well known that $\mc S(G)$ is compact and metrisable, and $\mc C(G)$ the space of all closed subgroups of $G$ is a closed subspace (see \cite{Fell}).
setting
A \emph{consistent selection} of subgroups $\{H_x\}_{x\in X}$ is a continuous mapping from $X$ into $\mc C(G)$ such that
\[
H_x\subseteq E_x
\]
for every $x$ in $X$, and which fulfills the consistency condition that
\begin{equation}\label{e:consistent selection}
H_{T^n x} = f(n,x)\cdot H_x \cdot f(n,x)^{-1},
\end{equation}
for every $x$ and $n\in\Z$.
\emph{In contrast to \cite{nilpotent} we do not assume that all $H_x$ belong to the same conjugacy class and assume continuity only with respect to the Fell topology.}

We will need the following auxiliary lemma on consistent selections:
\begin{lem}\label{l:cutting neighbourhood}
Let $\{H_x\}$ be a consistent selection for and let $U$ be any relatively compact open set in $G$.
Then the set $M_{U}= \{x\in X: E_x \cap \overline U H_x \setminus UH_x = \varnothing \}$
is open.
\end{lem}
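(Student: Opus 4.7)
My plan is to prove openness by a sequential argument. Fix $x_0\in M_U$ and suppose for contradiction that there is a sequence $x_n\to x_0$ with $x_n\notin M_U$. For each $n$ choose a witness $g_n\in E_{x_n}$ satisfying $g_n\in\overline U H_{x_n}$ but $g_n\notin U H_{x_n}$, and decompose $g_n=u_n h_n$ with $u_n\in\overline U$ and $h_n\in H_{x_n}$. By compactness of $\overline U$ I pass to a subsequence so that $u_n\to u\in\overline U$. Since $u_n$ and $g_n$ generate the same coset of $H_{x_n}$, the condition $g_n\notin U H_{x_n}$ is exactly $u_n H_{x_n}\cap U=\varnothing$. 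The target is to produce $u\in E_{x_0}\cap\overline U H_{x_0}\setminus U H_{x_0}$, contradicting $x_0\in M_U$.

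The easy half is $u\notin U H_{x_0}$ (together with $u\in\overline U\subseteq\overline U H_{x_0}$, which is automatic). If instead $uh\in U$ for some $h\in H_{x_0}$, then by Fell-continuity of $y\mapsto H_y$ one can find $h_n'\in H_{x_n}$ with $h_n'\to h$, hence $u_n h_n'\to uh\in U$; openness of $U$ forces $u_n h_n'\in U$ for large $n$, contradicting $u_n H_{x_n}\cap U=\varnothing$.

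The core step is to show $u\in E_{x_0}$. The issue is that the given witness $g_n\in E_{x_n}$ may escape every compact set of $G$, since the $h_n\in H_{x_n}$ are uncontrolled; I would therefore replace $g_n$ by the bounded $u_n=g_n h_n^{-1}$ and argue $u_n\in E_{x_n}$, after which upper semi-continuity of $y\mapsto E_y$ yields $u=\lim u_n\in E_{x_0}$ and closes the contradiction. This replacement requires the right $H_{x_n}$-invariance $E_{x_n}H_{x_n}\subseteq E_{x_n}$, which is the main obstacle of the proof: it is the dynamical consequence of $H_{x_n}\subseteq E_{x_n}$ obtained by concatenating (via minimality of $T$) cocycle approximations of any $e\in E_{x_n}$ along recurrences near $x_n$ with corresponding approximations of $h\in H_{x_n}\subseteq E_{x_n}$. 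Once this invariance is available, the remaining ingredients—Fell-continuity of $H_y$, upper semi-continuity of $E_y$, and compactness of $\overline U$—assemble routinely into the argument above.
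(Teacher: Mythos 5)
Your overall strategy --- take a sequence $x_n\to x_0$ of putative counterexamples, replace the (possibly unbounded) witnesses $g_n\in E_{x_n}\cap\overline U H_{x_n}\setminus UH_{x_n}$ by bounded coset representatives $u_n=g_nh_n^{-1}\in\overline U$, pass to a limit and combine semi-continuity of $x\mapsto E_x$ with Fell-continuity of $x\mapsto H_x$ --- is the same as the paper's, and your treatment of the easy half ($u\notin UH_{x_0}$) is correct. The gap sits exactly at the step you yourself flag, the right-invariance $E_{x_n}\cdot H_{x_n}\subseteq E_{x_n}$. The concatenation you sketch --- splicing an approximation of an element of $E_{x_n}$ "along recurrences near $x_n$" with an independent approximation of $h\in H_{x_n}\subseteq E_{x_n}$ --- is precisely the argument that would show $E_{x_n}\cdot E_{x_n}\subseteq E_{x_n}$, and that is false in general: outside the comeager set $\mc D(f)$ the essential range need not be a semigroup (the paper emphasizes this). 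The obstruction is that the definition of $E_{x_n}$ only produces a block $f(m,w)\in Vh$ at \emph{some} point $w$ of a prescribed neighbourhood, not at the endpoint $T^{n}y$ of the block approximating the first factor, so the two blocks cannot be matched; minimality of $T$ does not repair this.

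The invariance $E_x\cdot H_x\subseteq E_x$ is nevertheless true for a consistent selection, but its proof must invoke both defining properties of the selection at the \emph{witness points} rather than at $x$: given $f(n,y)\in V_1 g$ with $y,\,T^ny$ near $x$, Fell-continuity supplies $h'\in H_y$ close to $h$, the inclusion $H_y\subseteq E_y$ supplies a second block $f(m,w)\in V_3 h'$ with $w,\,T^mw$ in an arbitrarily small neighbourhood of $y$, and continuity of $f(n,\cdot)$ for the now \emph{fixed} $n$ makes $f(n+m,w)=f(n,T^mw)\cdot f(m,w)$ approximate $gh$ after the usual conjugation bookkeeping. Adding this argument closes your proof. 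The paper's own proof sidesteps the issue: it first replaces the witnesses by genuine cocycle values $f(n_k,z_k)$ at points $z_k,\,T^{n_k}z_k$ lying in $\mc D(f)$ and close to $x_0$ (at the cost of enlarging $\overline U$ to $\overline W$ and shrinking $U$ to $V$), where $E_{z_k}=P_{z_k}$ \emph{is} a closed group containing $H_{z_k}$; multiplying by $h_k^{-1}\in H_{z_k}$ then keeps the modified values realizable as limits of cocycle values $f(n_k+m_j,z_k)$, hence inside a fixed compactum and contributing to $E_{x_0}$ in the limit.
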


\begin{proof}
We show that the complement of $\mc M_U$ is closed.
Indeed, suppose $x_k$ is a sequence of points converging to $x$ such that $E_{x_k}\cap \overline
U H_{x_k}\setminus U H_{x_k}\neq \varnothing$.
For any choice of relatively compact neighbourhoods $V$ and $W$ such that $\overline V\subseteq U$ and
$\overline U\subseteq W$ one can find points $z_k$ and $T^{n_k}z_k$ both converging to $x$ such that
\[
g_k = f(n_k,z_k) \in  \overline W H_{z_k}\setminus V H_{z_k}.
\]
Since $H_x$ depends continuously (with respect to the Fell topology) on $x$ we may assume without loss of generality that the points $z_k$ and $T^{n_k}z_k$ are from our dense set $\mc D(f)$, and therefore - after modifying the cocycle values along the essential ranges - the $g_k$ stay in
some fixed compactum. Thus the $g_k$ converge along some subsequence to some element $g$ which must be contained in the set $E_x\cap \overline W H_{x}\setminus V H_{x}$.
As $V$ and $W$ were arbitrary, this implies that $E_{x}\cap \overline U H_x\setminus U H_x\neq \varnothing$.
\end{proof}

We omit the proof of the following Lemma which is verbatim as the one for Lemma 4.3 in \cite{nilpotent}.
Their proof is in the same manner as the previous Lemma.

\begin{lem}\label{l:connectedness}
Let $U\subseteq G$ be an open subset and $C\subseteq G$ a compact subset.
Then for any fixed integer $n$ the sets $\{y\in X: f(n,y) \cdot H_y\cap U H_y\neq\varnothing\}$ and $\{y\in X: f(n,y) \cdot H_y\cap C H_y=\varnothing\}$ are both open.
\end{lem}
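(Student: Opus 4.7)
The plan is to first reformulate both conditions. Since each $H_y$ is a subgroup of $G$, for any set $V\subseteq G$ the intersection $f(n,y)\cdot H_y\cap V\cdot H_y$ is non-empty if and only if $f(n,y)\in V\cdot H_y$. Thus the first set rewrites as $A=\{y\in X: f(n,y)\in U H_y\}$ and the second as $B=\{y\in X: f(n,y)\notin C H_y\}$. Combined with the continuity of $y\mapsto f(n,y)$ (a finite product of $f$ composed with iterates of $T$), both openness claims should then reduce to invoking one of the two halves of the Fell continuity of the selection $y\mapsto H_y$.

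To show $A$ is open, I would fix $y_0\in A$ and write $f(n,y_0)=u_0\cdot h_0$ with $u_0\in U$ and $h_0\in H_{y_0}$. Given any sequence $y_k\to y_0$, Fell continuity provides approximants $h_k\in H_{y_k}$ with $h_k\to h_0$. Since $f(n,y_k)\to u_0\cdot h_0$, the element $u_k:=f(n,y_k)\cdot h_k^{-1}$ converges to $u_0$, and openness of $U$ forces $u_k\in U$ for large $k$. Then $f(n,y_k)=u_k\cdot h_k\in U H_{y_k}$, placing $y_k$ in $A$.

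For $B$, I would show the complement is closed. Suppose $y_k\to y_0$ with $f(n,y_k)=c_k\cdot h_k$, $c_k\in C$, $h_k\in H_{y_k}$. Compactness of $C$ lets me pass to a subsequence on which $c_k\to c\in C$; then $h_k=c_k^{-1}\cdot f(n,y_k)$ converges to $c^{-1}\cdot f(n,y_0)$. The second half of Fell continuity — that every limit point of a sequence with $h_k\in H_{y_k}$ lies in $H_{y_0}$ — then gives $c^{-1}\cdot f(n,y_0)\in H_{y_0}$, whence $f(n,y_0)\in C H_{y_0}$, so $y_0\notin B$.

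I do not anticipate a serious obstacle. In contrast to Lemma \ref{l:cutting neighbourhood}, no modification along $\mc D(f)$ is needed to keep the relevant sequences in a compact region of $G$: in the first argument $h_k$ converges by construction, while in the second the combination of $c_k\in C$ with continuity of $f(n,\cdot)$ automatically pins $h_k$ to a bounded region.
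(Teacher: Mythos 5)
Your proof is correct and is essentially the argument the paper intends: the proof is omitted there with a reference to \cite[Lemma 4.3]{nilpotent} and to the semicontinuity argument of Lemma \ref{l:cutting neighbourhood}, and your sequential use of the two halves of Fell convergence of $y\mapsto H_y$ (existence of approximants for the open condition, stability of limits for the closed one), combined with continuity of $y\mapsto f(n,y)$ for fixed $n$, is exactly that argument. You are also right that, in contrast to Lemma \ref{l:cutting neighbourhood}, no passage to $\mc D(f)$ or modification along essential ranges is needed here, since $n$ is fixed.
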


The following proposition which generalises a Lemma of G. Atkinson in \cite{Atkinson} will be the key for proving regularity of cocycles.
It is an improvement of the corresponding generalisation \cite[Proposition 4.4]{nilpotent} as we do only need `cutting neighbourhoods' at a single point in $X$; we moreover do not make any assumptions on the group $G$.

\begin{prop}\label{p:generalised Atkinson}
Suppose that $G$ is a l.c.s.c. group and $f:X\longrightarrow G$ is a recurrent cocycle over a minimal rotation $T$ on a locally connected compact group $X$, and let $\{H_x\}_{x\in X}$ be a consistent selection of subgroups. 
If there exists a point $x_0$ for which the group $H_{x_0}$ has a cutting neighbourhood in $E_{x_0}$, i.e. a relatively compact open neighbourhood $U$ of the identity such that
\[
E_{x_0}\cap \overline U H_{x_0}\setminus U H_{x_0} =\varnothing,
\]
Then the $T_f$-orbit closure of any point $(x,H_x)$ is a compact subset of $X\times G/H_x$.
\end{prop}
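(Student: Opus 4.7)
My plan is to argue by contradiction: suppose that for some $x\in X$ the orbit closure $C = \overline{T_f^{\mathbb Z}(x, H_x)}\subseteq X\times G/H_x$ fails to be compact. Compactness of $X$ together with non-compactness of $C$ produces a sequence of iterates $(T^{n_k}x, f(n_k, x)H_x)$ with $T^{n_k}x$ converging in $X$ and $f(n_k, x)H_x$ escaping every compact subset of $G/H_x$; the aim is to derive a contradiction from the cutting condition at $x_0$.

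First I localise the cutting hypothesis via Lemma \ref{l:cutting neighbourhood}: the set $M_U = \{y\in X: E_y\cap\overline U H_y\setminus U H_y = \varnothing\}$ is open and contains $x_0$. Choose an open $\mathcal W\ni x_0$ with $\overline{\mathcal W}\subseteq M_U$. By minimality of $T$, the return set $R = \{n : T^n x\in\mathcal W\}$ is syndetic, with some uniform gap bound $N\geq 1$. From $f(n+j, x) = f(j, T^n x)\cdot f(n, x)$ and the fact that $K_N = \{f(j, y) : |j|\leq N,\; y\in X\}$ is a compact subset of $G$, compactness of $\{f(n, x) H_x : n\in\mathbb Z\}$ in $G/H_x$ reduces to compactness of the return-time trajectory $\{f(n, x) H_x : n\in R\}$.

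The decisive step then exploits the cutting at each return point $T^n x\in \mathcal W$. Fix a relatively compact open $V\supseteq\overline U$ and, for each $n$, consider the open sets
\[
\mathcal A_n = \{y : f(n, y) H_y\cap U H_y\neq\varnothing\},\qquad \mathcal B_n = \{y : f(n, y) H_y\cap \overline V H_y = \varnothing\},
\]
furnished by Lemma \ref{l:connectedness}. If $\{f(n, x) H_x : n\in R\}$ failed to be relatively compact, then after passing to a subsequence one obtains $n_k\in R$ with $T^{n_k}x\to y\in\overline{\mathcal W}\subseteq M_U$ and $f(n_k, x) H_x$ eventually outside $\overline V H_{T^{n_k}x}/H_{T^{n_k}x}$; combining local connectedness of $X$ with the alternative $X\setminus(\mathcal A_n\cup \mathcal B_n)$ along syndetic return times, we may further arrange that $f(n_k, x)\in \overline V H_{T^{n_k}x}\setminus U H_{T^{n_k}x}$ for suitable $n_k\in R$. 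Combining this with recurrence of $T_f$ and Lemma \ref{l:surjective}, applied exactly as in the corresponding step of \cite[Proposition 4.4]{nilpotent}, one extracts a limiting small-return value at $y$ lying in $E_y\cap \overline U H_y\setminus U H_y$, which contradicts $y\in M_U$.

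The main obstacle, and the technical heart of the argument, is this last passage: the cutting hypothesis restricts the essential range $E_y$, whereas the witnesses of unboundedness are cocycle values $f(n_k, x)$ which need not a priori belong to any $E_y$. The bridge must be built by extracting a limit of small-return differences $f(m_k, T^{n_k} x)$ near $y$ via recurrence, whose accumulation points populate $E_y$, and by forcing the location of this limit into the forbidden shell using local connectedness of $X$ together with the dichotomy provided by $\mathcal A_n$ and $\mathcal B_n$. The improvement over the cited proposition in \cite{nilpotent} lies precisely here: cutting is imposed only at the single point $x_0$, and is transported by minimality along the syndetic visits of the orbit to the open neighbourhood $\mathcal W\subseteq M_U$.
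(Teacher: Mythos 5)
Your overall strategy --- assume the trajectory $\{f(n,x)H_x\}$ escapes to infinity in $G/H_x$ and catch it in the forbidden shell --- is genuinely different from the paper's, and it breaks down at exactly the step you yourself flag as the ``technical heart''. The claim that one can ``arrange that $f(n_k,x)\in\overline V H_{T^{n_k}x}\setminus UH_{T^{n_k}x}$ for suitable $n_k\in R$'' is an intermediate-value assertion about the discrete sequence $n\mapsto f(n,x)$, and nothing in the available toolkit delivers it: a sequence in $G/H$ that starts at the identity coset and leaves every compact set need not meet any prescribed compact shell, and the connectedness supplied by Lemma \ref{l:connectedness} is connectedness in the \emph{base point} for a \emph{fixed} integer $n$, not connectedness in $n$. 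For fixed $n$ the set $\mathcal U\cap T^{-n}\mathcal U$ lies entirely in one of the two alternatives $\mathcal A_n$, $\mathcal B_n$, but different $n$ may realise different alternatives, so unboundedness of the values along $R$ never forces any of them into the shell. A second, independent problem is that the cutting hypothesis constrains only $E_y$, i.e.\ limits of return differences $f(m,z)$ with both $z$ and $T^m z$ tending to $y$; even if some $f(n_k,x)$ did land in a shell relative to $H_{T^{n_k}x}$, the associated return differences $f(n_l,x)\cdot f(n_k,x)^{-1}$ would only be confined to a product of shells, not to the shell itself, so no element of $E_y\cap\overline U H_y\setminus UH_y$ is produced. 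Your ``bridge'' paragraph names this difficulty but does not build the bridge.

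The paper's proof uses the shell in the opposite direction. For a point $(y,gH_x)$ of the orbit closure reached along $T_f^{n_k}(x,e)\to(y,g)$, the return differences $f(n_{k_0}-n_k,T^{n_k}x)=f(n_{k_0},x)\cdot f(n_k,x)^{-1}$ \emph{converge}, hence eventually lie in $UH$; Lemma \ref{l:connectedness} together with connectedness of the neighbourhood $\mathcal U$ of $y$ then propagates the containment in $UH$ from the base points $T^{n_k}x$ to the nearby base point $y$ itself. This bounds the backward (and, symmetrically, forward) orbit of $(y,gH_x)$ modulo $H_x$ and shows that $(x,H_x)$ lies in both the positive and the negative orbit closure of every point of $C$; compactness then follows not from any direct boundedness of the trajectory but from the almost periodicity theorem of Gottschalk and Hedlund. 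In short, the shell-plus-connectedness device is a tool for \emph{propagating} boundedness of already-convergent return differences from one base point to another; it cannot be used to \emph{refute} unboundedness of the raw cocycle values, which is what your contradiction scheme requires. As written, the central step of your argument is a gap that the cited lemmas do not close.
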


\begin{proof}
First of all note that it is sufficient to prove the existence of a single point $x$ such that the $T_f$-orbit closure of $(x,H_x)$ is compact, since this implies compactness of all other $T_f$-orbit closures.
Indeed, if $C$ is such a compact orbit closure then it projects onto the whole basis $X$.
Thus for every $y$ in $X$ there exists a $g\in G$ such that $(y,g H_x)\in C$ and therefore
we can find a sequence $\{n_k\}$, elements $h_k\in H_x$ and $g\in G$, such that $T^{n_k}x \rightarrow y$ and $f(n_k,x)\cdot h_k \rightarrow g$.
By continuity of the consistent selection we see that
\[
H_y= \lim_{k\rightarrow\infty} f(n_k,x) \cdot H_x\cdot f(n_k,x)^{-1} = g \cdot H_x \cdot g^{-1}.
\]
This shows that the compact $T_f$-orbit of $(y,g H_x)$ in $X\times G/H_x$ translates under the right translation by $g^{-1}$ to the $T_f$-orbit of $(y, H_y)$ in $X\times G/H_y$.
As a right translation is a homeomorphism the orbit closure of $(y,H_y)$ is also compact.

According to Lemma \ref{l:cutting neighbourhood} the set $\mc M_{U}= \big\{x\in X: E_x \cap \overline U H_x \setminus UH_x = \varnothing \big\}$ is open for every relatively compact open neighbourhood $U$, and therefore the $T$-invariant non-empty set $\mc M_{cut} = \bigcup_{U} \mc M_U$, where the union is taken over all relatively compact open neigbourhoods of the identity, is open too.
This yields $\mc M_{cut}=X$ and thus for every point $y$ in $X$ we can find a relatively compact cutting neighbourhood.

By recurrence both sets
\[
\mc R_\pm = \Big\{ x\in X: (x,e)\in \overline{T_f^{\Z_\pm} (x,e)} \Big\}\footnote{Here $\Z_+$ and $\Z_-$ denotes the set of all integers $>0$ and $<0$, respectively.}
\]
are comeager subsets of $X$, and so is the intersection $\mc R_+ \cap \mc R_- \cap \mc D(f)$.
Choose any point $x$ from this non-empty intersection and set
\[
C = \overline{T_f^{\Z} (x,H_x)}.
\]

Let $(y, g H_x)$ be any point belonging to the orbit closure $C$.
By our choice of $x$ there exists an increasing sequence of integers $n_k>0$ such that $(y,g)= \lim_{k\rightarrow\infty} T_f^{n_k} (x,e)$.
As above, we conclude that $H_y= g \cdot H_x \cdot g^{-1}$.
Let $U$ be a relatively compact cutting neighbourhood for $H_y$ in $E_y$.
Since $\mc M_U$ is open we can choose a connected open neighbourhood $\mc U$ of $y$ such that
\[
f(n,z) \in \begin{cases} U H_z,  \\ G\setminus\overline U H_z \end{cases}\text{ whenever } z \text{ and } T^n z \in \mc U.
\]
By convergence of $T_f^{n_k}(x,e)$ to $(y,g)$ we can find an integer $k_0$ such that
$z= T^{k_0}x \in\mc U$ and
\[
f(n_{k_0}- n_k, z)\in  U H_z
\]
for all $k\geq k_0$.
As the neighbourhood $\mc U$ is connected 
it follows from Lemma \ref{l:connectedness} that the same is true with $y$
replaced by $z$. Therefore all the cocycle values
\[
\begin{aligned}
f(-n_k, y)\cdot g & = f(-n_{k_0}, T^{-(n_k - n_{k_0})}y)\cdot f\left(n_{k_0} - n_k,y\right) \cdot g \\
&\in f(-n_{k_0}, T^{-(n_k - n_{k_0})}y)\cdot U \cdot \underbrace{H_y \cdot g}_{=g \cdot H_x },
\end{aligned}
\]
and therefore stay within some compact subset of $G/H_x$ as $k\rightarrow\infty$. Together with the fact that $T^{-n_k}y$ converges back to $x$ it implies that the $T_f^{-n_k} (y,gH_x)$ converge along some subsequence to $(x,g'H_x)$ with $g'$ in $P_x$.
In other words, $(x,g'H_x)$ is in the negative orbit closure of $(y,gH_x)$.
In the same manner one sees that also $(x,H_x)$ is in the negative orbit closure of $(x,g'H_x)$ and
therefore it is contained in the negative orbit closure of $(y,gH_x)$.

By the same argument one shows that $(x,H_x)$ is also contained in the positive orbit closure of $(y,gH_x)$. Together with recurrence, we conclude from \cite[Theorem 7.05]{GH} that $T_f$ restricted to $C$ is almost periodic and therefore $C$ is compact.
\end{proof}

\begin{prop}\label{p:regular orbit closure}
Under the same assumptions as for Proposition \ref{p:generalised Atkinson}, the $T_f$-orbit closure $C$ of any point $(x,e)$, with $x$ from the set $\mc D(f)$, is strongly regular.
Moreover $H_x$ is co-compact and normal in $E_x$.
\end{prop}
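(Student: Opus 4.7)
The starting point is Proposition~\ref{p:generalised Atkinson}: the orbit closure $C' := \overline{T_f^{\Z}(x, H_x)} \subseteq X \times G/H_x$ is compact, and inspection of its proof shows that $T_f$ restricted to $C'$ is minimal and almost periodic, hence equicontinuous. First I would identify the right stabiliser $H = \{g : C g^{-1} = C\}$ of $C$. Applying $R_g$ to $(x, e) \in C$ forces $g^{-1} \in P_x = E_x$ (using $x \in \mc D(f)$), so $H \subseteq E_x$. Conversely, for $g \in E_x$ the group property of $E_x$ gives $(x, g^{-1}) \in C$, and approximating any $(y, h) \in C$ by $T_f^{n_k}(x, e)$ shows $(y, h g^{-1}) = \lim T_f^{n_k}(x, g^{-1}) \in C$, whence $C g^{-1} \subseteq C$; thus $H = E_x$. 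In particular $H_x \subseteq E_x = H$, so $C$ is right $H_x$-invariant, $\pi(C) = C'$ under the canonical projection $\pi \colon X \times G \to X \times G/H_x$, and $C'_x = \pi(P_x) = E_x / H_x$.

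Both parts of the ``moreover'' claim follow immediately from this identification. Co-compactness of $H_x$ in $E_x$ is clear since $E_x / H_x = C'_x$ is a closed subset of the compact space $C'$. For normality, any $g \in E_x = P_x$ writes as $g = \lim_k f(n_k, x)$ with $T^{n_k} x \to x$, and applying the consistency relation~\eqref{e:consistent selection} together with Fell-continuity of $y \mapsto H_y$ and continuity of conjugation on $\mc C(G)$ yields
\[
g H_x g^{-1} = \lim_k f(n_k, x) H_x f(n_k, x)^{-1} = \lim_k H_{T^{n_k} x} = H_x.
\]

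For strong regularity I would pass to the further $T_f$-equivariant quotient $\rho \colon X \times G/H_x \to X \times G/E_x$ and consider $C'' := \rho(C')$. As a continuous factor of the equicontinuous minimal system $(C', T_f)$, the system $(C'', T_f)$ is also equicontinuous and minimal, and its fibre over $x$ is the singleton $C''_x = \rho(E_x / H_x) = \{e E_x\}$. Invoking the classical fact that an almost one-to-one factor map between equicontinuous minimal flows is one-to-one --- one takes an admissible $T_f$-invariant metric on $C''$ and pushes any pair of fibre points $y_1, y_2 \in C''_y$ by $T$-minimality into the singleton fibre over $x$, whence $T_f$-isometry forces $y_1 = y_2$ --- one obtains that $C'' \to X$ is a homeomorphism. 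Pulling back through $\pi$, every vertical section $C_y$ is a single left coset of $H = E_x$, which is strong regularity.

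The hard part will be step three: one must justify carefully both that equicontinuity descends from $C'$ to the factor $C''$, and the singleton-fibre rigidity for equicontinuous minimal extensions of $(X, T)$. Both are standard facts from the theory of minimal flows, but must be invoked with care, since without the equicontinuity one would be facing exactly the open question discussed in the introduction.
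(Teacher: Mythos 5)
Your identification of the right stabiliser $H=E_x$ of $C$, and your arguments for co-compactness and normality of $H_x$ in $E_x$ (the latter via Fell-continuity of the consistent selection along a sequence $f(n_k,x)\to g\in P_x=E_x$), are correct and in fact supply a detail the paper leaves implicit. The strong regularity step, however, contains a genuine gap: the inference ``almost periodic, hence equicontinuous'' is false. What Proposition \ref{p:generalised Atkinson} delivers via \cite[Theorem 7.05]{GH} is that $C'$ is compact and minimal (pointwise almost periodicity), not uniform almost periodicity, and compact cocycle extensions of rotations are typically \emph{not} equicontinuous: already the skew product $(x,y)\mapsto(x+\alpha,y+x)$ on $\mathbb T^2$, i.e.\ an $\mathbb R/\mathbb Z$-valued cocycle over a rotation, is minimal and distal but not equicontinuous. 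Without equicontinuity there is no $T_f$-invariant metric on $C''$, so the device of pushing a fibre pair into the singleton fibre over $x$ by isometries is unavailable, and you are left facing precisely Open Question \ref{no hairs} --- as you yourself concede in your final paragraph, which means the ``hard part'' of your plan cannot be carried out as stated.

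The paper escapes this by exploiting group structure rather than the dynamics of the factor. For two points $(y,g_0),(y,g_1)$ in the same fibre of $C$ it first shows $g\cdot g_i\cdot E_x\cdot g_i^{-1}\cdot g^{-1}\subseteq E_x$ for a suitable return element $g$, whence $g_1^{-1}g_0\in N(E_x)$; then, after a cohomological change of representatives $c_k=b_k^{-1}\cdot f(n_k,y)\cdot b_0\in N(E_x)$ with $b_k\to e$, the two sequences $c_k\cdot(b_0^{-1}g_i)\cdot E_x$ both lie in and converge to the identity of the quotient group $N(E_x)/E_x$, which --- being a l.c.s.c.\ topological group because $E_x$ is normal in $N(E_x)$ --- genuinely carries a left-invariant metric. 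That invariant metric, living on $N(E_x)/E_x$ rather than on the orbit closure, is what replaces your appeal to equicontinuity and is the idea missing from your proposal. To repair your write-up, replace your step three by this normaliser argument; proving equicontinuity of $(C',T_f)$ is not an option in general.
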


\begin{proof}
Let $x$ be from the set $\mc D(f)$.
Recall that then $E_x=P_x$ is a closed subgroup of $G$ which contains $H_x$.
According to Proposition \ref{p:generalised Atkinson} the $T_f$-orbit closure of $(x,H_x)$ is compact.
In particular $E_x/H_x$ is compact.
The projection of the $T_f$-orbit is also compact and $T$-invariant, thus it equals $X$.
As $H_x\subseteq P_x$ the same holds for the $T_f$-orbit closure $C$ of $(x,e)$.

Let $(y,g_0)$ and $(y,g_1)$ belong to $C$.
As in the proof of Proposition \ref{p:generalised Atkinson} we follow that
both
\[
H_y = g_i\cdot H_x\cdot g_i^{-1} \subseteq g_i\cdot E_x \cdot g_i^{-1} \subseteq E_y,
\]
for $i=0,1$.
On the other hand, by compactness of the orbit closure $C$ one can choose a sequence $\{n_k\}_{k\geq 1}$ and $g$ in $G$ such that $T^{n_k}y\rightarrow x$ and $f(n_k,y)\cdot H_y\rightarrow g\cdot H_y$.
Again, by the same reasoning as before (the $f(n_k,y)$ converge to $g$ modulo $g_i\cdot E_x \cdot g_i^{-1}$) both
\[
g \cdot g_i\cdot E_x \cdot g_i^{-1}\cdot g^{-1} \subseteq E_x,
\]
for $i=0,1$.
Since $E_x$ is a group $g_1^{-1}\cdot g_0$ belongs to the normaliser $N(E_x)$ of $E_x$. The only thing left to prove is that every slice $C_y$ consists of a single left coset of $E_x$, i.e. $g_1^{-1}\cdot g_0\in E_x$.
This is done by a simple `cohomology' argument.
Since $C_x=E_x$ both the sequences $f(n_k,y) \cdot g_i \cdot E_x$ from above converge to $E_x$.
Let us define a `boundary function' on our countable set $\{y\}\cup\{T^{n_k}y\}_{k}$ by setting $b_0= g_0$ and choosing
\[
b_k \in f(n_k,y)\cdot g_0\cdot N(E_x)
\]
such that $b_k\rightarrow e$.
Then
\[
c_k= b_k^{-1} \cdot f(n_k,y) \cdot b_0 \in N(E_x)
\]
and both the sequences $c_k \cdot (b_0^{-1} \cdot g_0) \cdot E_x$ and $c_k \cdot (g_0^{-1}\cdot g_1)\cdot E_x$ are contained in $N(E_x)$ and converge to $E_x$.
As $E_x$ is normal in $N(E_x)$ there exists a left-invariant metric for the topology in $N(E_x)/E_x$ and it follows that $(b_0^{-1}\cdot g_0)\cdot E_x= (b_0^{-1}\cdot g_1)\cdot E_x$.
Thus $g_0 \cdot E_x= g_1 \cdot E_x$.
\end{proof}

\begin{cor}\label{c:normal subgroup}
Suppose that $T$ be a minimal rotation on a locally connected compact group $X$, and $f$ is a recurrent cocycle with values in a l.c.s.c. group $G$.
If there exists a point $x_0\in X$ for which the identity component of $E_{x_0}$ is a normal subgroup of $G$, then $f$ is strongly regular and $E_x / E_x^0$ is compact.
\end{cor}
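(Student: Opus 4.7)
The plan is to apply Proposition \ref{p:regular orbit closure} to the \emph{constant} consistent selection $H_x \equiv N$, where $N := E_{x_0}^0$ is the given normal subgroup of $G$; the strong regularity claim and the compactness of $E_x/E_x^0$ will then follow readily.

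First I would verify that $H_x \equiv N$ is in fact a consistent selection in the sense of Section \ref{s:Atkinson lemma}. Equation \eqref{e:essential ranges}, applied to identity components along the orbit of $x_0$, gives $E_{T^n x_0}^0 = f(n,x_0)\cdot N \cdot f(n,x_0)^{-1} = N$ by normality of $N$ in $G$, so in particular $N\subseteq E_{T^n x_0}$ for every $n\in\Z$. Minimality of $T$ makes the orbit of $x_0$ dense in $X$, and the semi-continuity of $x\mapsto E_x$ then propagates the inclusion to all $x\in X$. The consistency relation $H_{T^n x} = f(n,x)\cdot N\cdot f(n,x)^{-1} = N$ and continuity in the Fell topology are automatic because $N$ is normal and $H_x$ is constant.

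The main work is producing a cutting neighbourhood for $N$ in $E_{x_0}$, i.e.\ a relatively compact open $U\subseteq G$ with $\overline U N \cap E_{x_0}\subseteq UN$. Since $N = E_{x_0}^0$, the quotient $E_{x_0}/N$ is locally compact and totally disconnected, and van Dantzig's theorem provides a compact open subgroup $K/N\subseteq E_{x_0}/N$. Writing $q:G\to G/N$ for the quotient map, I would first produce a relatively compact open set $W\subseteq G/N$ with $W\cap (E_{x_0}/N) = K/N$, by lifting the relative openness of $K/N$ to an open set in $G/N$ and intersecting with a relatively compact open neighbourhood of the compact set $K/N$. Next I pick a compact $C\subseteq K$ containing $e$ with $CN = K$, and let $U$ be a relatively compact open neighbourhood of $C$ in $G$ contained in $q^{-1}(W)$. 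Because $\overline U$ is compact we have $q(\overline U) = \overline{q(U)}\subseteq\overline W$, and therefore
\[
\overline{q(U)}\cap (E_{x_0}/N)\;\subseteq\;\overline W\cap (E_{x_0}/N)\;=\;K/N\;\subseteq\;q(U).
\]
Pulling this back through $q$, using that $N\subseteq E_{x_0}$ implies $q^{-1}q(E_{x_0}) = E_{x_0}$, yields $\overline U N\cap E_{x_0}\subseteq UN$, i.e.\ the cutting condition.

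With the cutting neighbourhood available, Proposition \ref{p:regular orbit closure} directly gives that $f$ is strongly regular and that $N = H_x$ is co-compact and normal in $E_x$ for every $x\in\mc D(f)$. Since $N$ is connected, $N\subseteq E_x^0$, so $E_x/E_x^0$ is a continuous image of the compact group $E_x/N$ and is therefore compact. Because strong regularity forces every $E_x$ to be conjugate to a single fixed subgroup $H\subseteq G$ (see Section \ref{s:basic facts}), the isomorphism $E_x/E_x^0\cong H/H^0$ extends this compactness to all $x\in X$. The only genuine obstacle in the whole argument is the cutting neighbourhood step: $N$ is typically neither open in $E_{x_0}$ nor compact, so one cannot simply trap $N$ inside a small neighbourhood of $e$ in $G$, and the van Dantzig step—replacing $N$ by a larger open subgroup of $E_{x_0}$ with compact quotient modulo $N$—is what makes the separation in $G$ possible.
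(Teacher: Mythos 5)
Your overall strategy is exactly the paper's: the printed proof of Corollary \ref{c:normal subgroup} consists of the single sentence ``apply Proposition \ref{p:regular orbit closure} to the constant consistent selection $H_y\equiv E_{x_0}^0$'', and your verification that this really is a consistent selection (normality gives the consistency relation \eqref{e:consistent selection} and Fell-continuity for free, while the orbit of $x_0$ together with semi-continuity of $x\mapsto E_x$ gives $N\subseteq E_y$ for all $y$) is precisely the intended reading. You in fact go further than the paper by checking the remaining hypothesis of Proposition \ref{p:generalised Atkinson} --- the existence of a cutting neighbourhood for $N$ in $E_{x_0}$ --- which the paper passes over in silence; the concluding deduction that $E_x/E_x^0$ is compact, as a quotient of the compact group $E_x/N$ together with conjugacy of all the $E_x$ under strong regularity, is also fine.

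The one step I cannot accept as written is the van Dantzig argument. You form the quotient \emph{group} $E_{x_0}/N$, but $E_{x_0}$ is only known to be a closed symmetric subset of $G$; it is a subgroup only when $x_0\in\mc D(f)$, and the hypothesis of the corollary does not place $x_0$ there. Nor can you simply relocate the construction to some $y\in\mc D(f)$: there $E_y$ is indeed a closed subgroup containing $N$, but its identity component could a priori be strictly larger than $N$, in which case $E_y/N$ is not totally disconnected and no cutting neighbourhood for $N$ in $E_y$ need exist (think of $E_y^0/N\cong\R$). For a general closed symmetric set $E$ with $E^0=N$ the desired separation can genuinely fail, because passing to $G/N$ may merge components: e.g.\ $E=(\R\times\{0\})\cup\{(t,\pm(1+|t|)^{-1}):t\in\R\}$ in $G=\R^2$ with $N=\R\times\{0\}$ has $E^0=N$, yet $E\cap\overline{U}N\setminus UN\neq\varnothing$ for every relatively compact open $U$. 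So either the corollary is to be read with $x_0\in\mc D(f)$ --- in which case your proof is complete and supplies a detail the paper omits --- or an extra argument is needed to rule out such behaviour of $E_{x_0}$; as it stands this is a gap, albeit one the paper's own one-line proof shares.
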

\begin{proof}
We apply Proposition \ref{p:regular orbit closure} to the consistent selection defined by setting $H_y = E_x^0$ for all $y$ in $X$.
\end{proof}

\begin{cor}
Under the same assumption as Corollary \ref{c:normal subgroup}, if there exists a point $x_0$ for which $E_{x_0}=\{e\}$ then $f$ is a coboundary.
\end{cor}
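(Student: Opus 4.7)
The plan is essentially immediate from Corollary \ref{c:normal subgroup}. Since $E_{x_0}=\{e\}$, the identity component $E_{x_0}^0=\{e\}$ is (trivially) a normal subgroup of $G$, so the hypothesis of Corollary \ref{c:normal subgroup} is satisfied and $f$ is strongly regular. We therefore have a strongly regular orbit closure $C\subseteq X\times G$ with associated closed subgroup $H=\{g\in G:R_g(C)=C\}$, and, by the structural discussion preceding the definition of strong regularity in Section \ref{s:basic facts}, a continuous section $\gamma:X\to G/H$, $x\mapsto g_x\cdot H$, such that $C_x=g_x\cdot H$ and $E_x=g_x\cdot H\cdot g_x^{-1}$ for \emph{every} $x\in X$.

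Specialising the latter identity to $x=x_0$ yields $\{e\}=g_{x_0}\cdot H\cdot g_{x_0}^{-1}$, so $H=\{e\}$. In particular $G/H$ identifies with $G$, the section $\gamma$ becomes a genuine continuous function $b:X\to G$, $b(x)=g_x$, and the orbit closure $C$ is exactly the graph of $b$.

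Finally, $T_f$-invariance of $C$ gives $T_f(x,b(x))=\bigl(Tx,f(x)\cdot b(x)\bigr)\in C$ for every $x\in X$; since the fibre of $C$ over $Tx$ is the singleton $\{b(Tx)\}$, we conclude $f(x)\cdot b(x)=b(Tx)$, i.e.\ $f(x)=b(Tx)\cdot b(x)^{-1}$. Hence $f$ is a continuous coboundary with transfer function $b$. There is no serious obstacle here: the entire content of the proof is the application of Corollary \ref{c:normal subgroup} together with the observation that a strongly regular orbit closure whose stabiliser subgroup $H$ is trivial must be the graph of a continuous function on $X$.
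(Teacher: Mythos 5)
Your proof is correct and follows essentially the same route as the paper's: apply Corollary \ref{c:normal subgroup} (the trivial group being normal), use the structural facts about strongly regular orbit closures to deduce from $E_{x_0}=\{e\}$ that the stabiliser $H$ is trivial, and read off the transfer function from the graph. The only cosmetic difference is that the paper sets $H=E_x$ for some $x\in\mc D(f)$ rather than invoking $H=\{g:R_g(C)=C\}$ directly, but these coincide by the discussion in Section \ref{s:basic facts}.
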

\begin{proof}
By the previous corollary, the $T_f$-orbit closure $C$ of any point $(x,e)$ with $x\in\mc D(f)$ is regular and compact. Let us set $H=E_x$.
By regularity every vertical section $C_y= g_y\cdot H$ for some $g_y$ in $G$, and moreover all essential ranges are conjugate to $H$ (see Section \ref{s:basic facts}).
Since $E_{x_0}$ is trivial so must be $H$, and therefore the set $C$ projects injectively onto $X$. This implies that $C$ is the graph of a continuous function $b:X\rightarrow G$ and $b(Ty)= f(y)\cdot b(y)$ for every $y$ in $X$.
Thus $f(y) = b(Ty)\cdot b(y)^{-1}$ is a coboundary.
\end{proof}

\section{Regularity in general Lie groups}\label{s:Lie groups}

Throughout this section we will assume that $G$ is a connected Lie group, and $\mf G$ is its Lie algebra.
As usual, the group $\Aut(G)$ of all bicontinuous automorphisms of $G$ is considered as a (closed) subgroup of $\GL(\mf G)$.
We denote by $\Ad(G)$ the image of $G$ under the adjoint representation. Since $G$ is connected, $\Ad(G)$ is contained in $\Aut(G)^0$ the identity component of the automorphism group, which is an
almost algebraic subgroup of $\GL(\mf G)$ (i.e. of finite index in some algebraic subgroup of $\GL(\mf G)$; this is a theorem of D. Wigner, cf. \cite{Dani01}).

For any cocycle $f$ with values in $G$ we define its \emph{adjoint cocycle} by setting
\begin{equation*}
\Ad(f) (n,x) = \Ad\big(f(n,x)\big),
\end{equation*}
which is a cocycle taking values in $\Ad(G)\subseteq \GL(\mf G)$.
It is clear that if $f$ is continuous and recurrent so is $\Ad(f)$.

The following proposition describes the behaviour of the identity component of an essential range under conjugation by the cocycle $f$.
Its proof essentially uses the locally closedness of the orbit of a connected subgroup $H$ under the action of an almost algebraic group of automorphisms.
From this point of view it does not contain much new compared to \cite{Dani}.

\begin{prop}\label{p:compact orbits mod stab}
Suppose $T$ be a minimal homeomorphism of a compact metric space $X$ and $f$ is a continuous cocycle with values in a connected Lie group $G$.
Choose any almost almost algebraic and closed subgroup $A$ in $\Aut(G)^0$ which contains $\Ad(G)$.
If $x$ is any point from $\mc D(f)$ and $I_A(H)=\{\alpha\in A: \alpha(H)=H\}$ is the stabiliser of the identity component $H=E_x^0$ in $A$, then
the orbit closure
\[
C^*=\overline{T_{Ad(f)}^\mathbb Z\big(x, I_A(H)\big)}
\]
taken in $X\times A/I_A(H)$ has the following properties:
\begin{enumerate}
\item  it is compact and the action of $T_{\Ad(f)}$ restricted to $C^*$ is minimal,
\item it projects onto $X$, and injectively onto the set $\mathcal D(f)$.
\end{enumerate}
In other words the system $(C^*,T_{Ad(f)})$ is an almost one-to-one
extension of $(X,T)$.
\end{prop}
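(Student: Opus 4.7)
The natural first step is to identify $A/I_A(H)$ with the orbit $A\cdot H\subseteq\mc C(G)$ in the space of closed subgroups of $G$ (Fell topology), via $\alpha\,I_A(H)\mapsto \alpha(H)$. Since $A$ is almost algebraic and $H=E_x^0$ is a connected Lie subgroup, Wigner's theorem (cf.~\cite{Dani01}) guarantees that the orbit $A\cdot H$ is locally closed in $\mc C(G)$, so this map is a homeomorphism onto its image. Under this identification $T_{\Ad(f)}$ acts on $X\times \mc C(G)$ by $(y,K)\mapsto(Ty,\,f(1,y)\cdot K\cdot f(1,y)^{-1})$, and the conjugacy relation \eqref{e:essential ranges} applied to $H=E_x^0$ transforms the orbit of $(x,I_A(H))$ into
\[
\bigl\{(T^n x,\,E_{T^n x}^0):n\in\Z\bigr\}\ \subseteq\ X\times (A\cdot H).
\]

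For claim (i) I would take the closure of this orbit in the compact Polish space $X\times \mc S(G)$ and verify that it in fact lies in $X\times(A\cdot H)$. If $(T^{n_k}x,E_{T^{n_k}x}^0)\to(y,K)$ along some subsequence, then by semi-continuity of $z\mapsto E_z$ we have $K\subseteq E_y$, and passing to a further subsequence along which the Lie algebras of the $E_{T^{n_k}x}^0$ converge in the Grassmannian of $\dim H$-dimensional subspaces of $\mf G$ shows that $K$ is a connected closed subgroup of $G$ with $\dim K\ge \dim H$. Local closedness of $A\cdot H$ in $\mc C(G)$, combined with this dimension lower bound, then rules out that $K$ lies in $\overline{A\cdot H}\setminus(A\cdot H)$; hence $K\in A\cdot H$, and $C^*$ is compact and $T_{\Ad(f)}$-invariant.

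Claim (ii) then splits into two parts. Surjectivity of $\pi_X:C^*\to X$ is immediate since $\{T^n x\}$ is already dense in $X$ by minimality of $T$. For injectivity over $\mc D(f)$, suppose $(y,K_1),(y,K_2)\in C^*$ with $y\in\mc D(f)$: writing each $K_i$ as a Fell-limit of $f(n_k^{(i)},x)\cdot H\cdot f(n_k^{(i)},x)^{-1}$ and applying the cocycle identity to the differences $n_k^{(2)}-n_k^{(1)}$ (together with semi-continuity of $z\mapsto E_z$ at the recurrent point $y$) produces an element $g\in E_y$ with $K_2=g K_1 g^{-1}$; since $y\in\mc D(f)$ means $E_y$ is a closed subgroup of $G$ and both $K_i\subseteq E_y^0$, local closedness of the $A$-orbits inside $\mc C(E_y^0)$ forces $K_1=K_2$. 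Finally, minimality of $T_{\Ad(f)}$ on $C^*$ is automatic: $(C^*,T_{\Ad(f)})$ is then an almost one-to-one extension of the minimal system $(X,T)$, so any proper closed invariant subset would project onto $X$ and must therefore contain every single-point fibre over the dense $G_\delta$-set $\mc D(f)$, forcing it to coincide with $C^*$.

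The principal obstacle, I expect, is the compactness step, namely ruling out Fell-accumulations of the subgroups $E_{T^n x}^0$ in the boundary $\overline{A\cdot H}\setminus(A\cdot H)$. This is precisely where the almost algebraic hypothesis on $A$ has to be exploited, in the spirit of Dani's work \cite{Dani} on conjugacy classes of stabilisers.
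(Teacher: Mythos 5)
There is a genuine gap, and it sits exactly where you predicted: in ruling out Fell/Grassmannian accumulation of the conjugates of $H$ in the boundary of the $A$-orbit. Your argument is that local closedness of $A\cdot H$ together with the lower bound $\dim K\geq\dim H$ excludes $K\in\overline{A\cdot H}\setminus(A\cdot H)$. That implication is false: the boundary of a locally closed orbit consists of $A$-orbits of smaller dimension \emph{as orbits}, not of subspaces (or subgroups) of smaller dimension. For instance, the upper triangular group acting on the Grassmannian of lines in $\R^2$ has the locally closed (open) orbit of $\langle e_2\rangle$, whose boundary point $\langle e_1\rangle$ is again one-dimensional; a sequence in the orbit happily converges to it. The mechanism the paper actually uses is a \emph{two-sided} recurrence argument available only for $y\in\mc D(f)$: besides $T^{n_k}x\to y$ one also takes $T^{m_k}y\to x$, and semi-continuity of the essential ranges in both directions yields that $\mf H_y$ lies in $\overline{A\cdot\mf H_x}$ \emph{and} $\mf H_x$ lies in $\overline{A\cdot\mf H_y}$; only this mutual containment, combined with local closedness (the complement of the orbit in its closure is closed and $A$-invariant), forces $\mf H_y\in A\cdot\mf H_x$. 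The same back-and-forth pins down $\dim\mf H_y=\dim\mf H_x$, so the limit subalgebra equals $\mf H_y$ exactly — which is what makes the fibre over $y\in\mc D(f)$ a singleton. None of this controls limits over arbitrary $y\in X$; compactness of $C^*$ over all of $X$ is then deduced from Lemma \ref{l:surjective} (Baire category plus minimality of $T$), not by a direct boundary-exclusion argument.

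A second, related problem is that you invoke local closedness in the wrong space. What Wigner's theorem gives is that $\Aut(G)^0$ is almost algebraic; local closedness of $A$-orbits then holds on the Grassmannian $\mc H(\mf G)$ of subalgebras, by \cite[Corollary 3.2.12]{Zimmer}, and this is the space the paper works in. Local closedness of the orbit $A\cdot H$ in $\mc C(G)$ with the Fell topology is precisely the delicate point of Dani's work and requires extra hypotheses — this is why the paper has the separate Proposition \ref{p:compact orbits mod normaliser} with conditions (i)--(iii). Finally, your injectivity step is also unsound as stated: producing $g\in E_y$ with $K_2=gK_1g^{-1}$ and then claiming "local closedness forces $K_1=K_2$" is a non sequitur (conjugate subgroups in the same locally closed orbit need not coincide); in the paper injectivity over $\mc D(f)$ comes for free because both limits are shown to equal the \emph{same} subalgebra $\mf H_y$ of $E_y^0$, and the map $\alpha I_A(\mf H_x)\mapsto\alpha(\mf H_x)$ is a bijection onto the orbit.
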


\begin{proof}
Let $\mc H(\mf G)$ be the Grassmanian manifold of all subalgebras of our Lie algebra $\mf G$.
Let $x$ be as above, and $\mf H_x$ be the subalgebra that corresponds to the identity component $H_x= E_x^0$.
We choose open neighbourhoods $\mf U$ of $0$ in $\mf G$ and $U$ of $e$ in $G$ such that the exponential mapping is a diffeomorphism between $\mf U$ and $U$.
If $\{n_k\}_{k\geq 1}$ is any sequence of integers such that $T^{n_k}x\rightarrow y\in\mc D(f)$ then by compactness of $\mc H(\mf G)$ we have convergence (along some subsequence) of the conjugate subalgebras
\[
\mf H_k= \Ad\big(f(n_k,x)\big)\mf H_x \rightarrow \mf H',
\]
where $\mf H'$ is some subalgebra of the same dimension as $\mf H$. 
As $\exp(\mf H_k \cap \mf U) \subseteq E_{T^{n_k} x}^0 \cap U $ we conclude from semi-continuity of the essential ranges that $\exp(\mf H'\cap \mf U)\subseteq E_y\cap U$, and as $E_y$ is a closed group $E_y^0$ contains the closed subgroup generated by $\exp(\mf H')$.
Thus if we denote by $\mf H_y$ be the algebra corresponding to $H_y=E_y^0$, then $\mf H'\subseteq \mf H_y$.  
By the same reasoning, if $\{m_k\}_{k\geq 1}$ is such that $T^{m_k}y\rightarrow x$ we again may assume convergence (along some subsequence) of
\[
\Ad\big(f(m_k,y)\big)\mf H_y \rightarrow \mf H'',
\]
where $\mf H''$ is a subalgebra of the same dimension as $\mf H_y$, and that $E_x^0$ contains the closed subgroup generated by $\exp(\mf H'')$.
Therefore $\mf H''\subseteq \mf H_x$ and since $\mf H''$ has at least the dimension of $\mf H_x$ we conclude that $\mf H''=\mf H_x$ and also $\mf H' = \mf H_y$.
In other words, $\mf H_y$ is in the closure of the $A$-orbit of $\mf H_x$ and vice versa.
As $A$ is almost algebraic its orbits on $\mc H(\mf G)$ are locally closed \cite[Corollary 3.2.12]{Zimmer}, which is the same as saying that the factor map
\[
A/I_A(\mf H_x) \longrightarrow 
\mc H(\mf G),
\quad \alpha \cdot I_A(\mf H_x)\mapsto \alpha (\mf H_x),
\]
with $I_A(\mf H_x)= \big\{\alpha\in A: \alpha(\mf H_x)=\mf H_x \big\}$, is a homeomorphism between $A/I_A(\mf H_x)$ and the orbit $\mf H_x^A = \{\alpha(\mf H_x): \alpha \in A\}$, cf. \cite[Lemma 2.1.15]{Zimmer}.
We therefore follow that $\mf H_y$ must belong to $\mf H_x^A$, otherwise we contradict locally closedness of the $A$-orbits.
Hence $\mf H_y = \alpha_y (\mf H_x)$ for some $\alpha_y$ which is uniquely determined modulo $I_A(\mf H_x)$, and
\begin{gather*}
\Ad(f(n_k,x)) \cdot I_A(\mf H_x) \rightarrow \alpha_y \cdot I_A(\mf H_x)
\end{gather*}
along this subsequence of $\{n_k\}_{k\geq 1}$.
This means that $y$ is contained in the $\pi_X$-projection of the orbit closure
$C^*=\overline{T_{\Ad(f)}^\Z \big(x,I_A (\mf H_x)\big)}$.
Since $y$ in $\mc D(f)$ was chosen arbitrarily, the orbit closure $C^*$ projects onto $\mc D(f)$. 
By Lemma \ref{l:surjective} we can find a compact subset $K$ in $G$ such that 
\[
\big(X\times K \cdot I_A(\mf H_x)\big) \cap C^*
\]
projects onto the whole set $X$.
Since for every $y\in\mc D(f)$ the vertical section $C_y^*= \big\{ \alpha_y \cdot I_A(\mf H_x) \big\}$ is contained in the compact set $K\cdot I_A(\mf H_x)$ we conclude that the whole closure $C^*$ is contained in the compact set $X\times K\cdot I_A(\mf H_x)$.

Minimality of $C^*$ is clear since $T$ is minimal and the vertical section $C^*_x$ consists of a single point only.
\end{proof}

\begin{rem}\label{r:criterions}
It follows immediately from the above proof that on the comeager set $\mc D(f)$ all identity components $E_y^0$ are $A$-conjugate, i.e. for every $x,y\in\mc D(f)$, $E_y^0$ is the $A$-image of $E_x^0$.
\end{rem}

The connection of Proposition \ref{p:compact orbits mod stab} with a general regularity theorem as mentioned in the introduction is as follows:
If we could prove that the almost one-to-one extension $C^*$ in Proposition \ref{p:compact orbits mod stab} projects injectively onto the whole set $X$, then the mapping
\[
y\mapsto C^*_y = \alpha_y \cdot I_A(H)
\]
is continuous and therefore 
\[
H_y = \alpha_y (H)
\]
defines a consistent selection $\{H_y\}_{y\in X}$.
Thus if $T$ is a minimal rotation on a locally connected compact group $X$, we would be able to conclude with help of the generalised Atkinson's Proposition \ref{p:regular orbit closure} that every $f$ admits strongly regular orbit closures.
This makes the following open question so important for us:

\begin{open question}\label{no hairs}
Let $T_f$ be a continuous $G$-extension of a minimal group rotation $T$ (or more generally any minimal homeomorphism), and $H$ be a closed subgroup of $G$. 
Suppose $C\subseteq X\times G/H$ is a $T_f$-invariant compact set such that for every $x$ belonging to a dense $G_\delta$-set in $X$ the vertical section $C_x=\{gH\in G/H : (x,gH)\in
C\}$ consists of a single coset $g_x H$.
Is it true that then the same holds for every $x$ in $X$?
\end{open question}

This question can be answered positively for certain cases, as shown in \cite{nilpotent}.
For example, if for every $g\notin H$ we know that 
\[
e \notin \overline{HgH} = \overline{\{h_1\cdot g\cdot h_2 : h_1,h_2 \in H\}},
\]
which is always fulfilled in any nilpotent (or virtually nilpotent) group $G$ \cite[Theorem 3.1]{nilpotent}, or if $H$ is a normal subgroup of a (not necessarily nilpotent) group $G$.
However, it not clear to us whether the above open question is true in such a general formulation.

Now, let us provide a special version of Proposition \ref{p:compact orbits mod stab}, in which we replace the almost algebraic group $A$ by $\Ad(G)$ itself.
This version parallels the result on the identity components of the stabilisers of general Borel actions \cite[Corollary 5.3]{Dani}.

\begin{prop}\label{p:compact orbits mod normaliser}
Suppose $T$ be a minimal homeomorphism of a compact metric space $X$ and $f$ is a continuous cocycle with values in a connected Lie group $G$.
Let $H$ be the identity component of $E_x$ at some point $x\in \mc D(f)$, and $N(H)$ its normaliser in $G$.
Further assume that one of the following properties from \cite[Theorem 3.2]{Dani} are fulfilled, i.e.
\begin{enumerate}
\item $\Ad(G)$ is almost algebraic;
\item for all $g$ from the radical of $G$, the eigenvalues of $\Ad(g)$ are real;
\item $H$ is compact.
\end{enumerate}
Then the $T_f$-orbit closure $C^* = \overline{T_f^\Z \left(x,N(H)\right)}$ in $X\times G/N(H)$ is minimal and compact  and projects almost one-to-one onto $X$.
Besides for all $y$ in $\mc D(f)$, the identity component $E_y^0$ is conjugate to $H$.
\end{prop}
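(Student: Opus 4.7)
The plan is to mirror the argument of Proposition \ref{p:compact orbits mod stab}, with the almost algebraic group $A$ there replaced by $\Ad(G)$ itself. The natural map $G/N(H)\to \Ad(G)/I_{\Ad(G)}(\mf H)$ sending $gN(H)$ to $\Ad(g)\cdot I_{\Ad(G)}(\mf H)$ is an $\Ad(G)$-equivariant homeomorphism, since the stabiliser of $\mf H$ (equivalently of $H$) in $\Ad(G)$ is exactly $\Ad(N(H))$. Under this identification the orbit $T_f^\Z\bigl(x,N(H)\bigr)$ in $X\times G/N(H)$ corresponds precisely to $T_{\Ad(f)}^\Z\bigl(x,I_{\Ad(G)}(\mf H)\bigr)$, so it is enough to work in the latter picture.

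Next I would repeat the Grassmanian computation from the proof of Proposition \ref{p:compact orbits mod stab} verbatim: for $y\in\mc D(f)$ and any sequence $\{n_k\}$ with $T^{n_k}x\to y$, passing to a subsequence one obtains $\Ad(f(n_k,x))\mf H\to\mf H'$ with $\mf H'\subseteq\mf H_y$, and the symmetric step using a sequence $\{m_k\}$ with $T^{m_k}y\to x$ forces $\mf H'=\mf H_y$. Thus $\mf H_y$ lies in the closure of the $\Ad(G)$-orbit of $\mf H$ and, symmetrically, $\mf H$ lies in the closure of the $\Ad(G)$-orbit of $\mf H_y$.

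The crucial input comes here: under any of the three hypotheses, \cite[Theorem 3.2]{Dani} guarantees that the $\Ad(G)$-orbits on the Grassmanian $\mc H(\mf G)$ are locally closed. Hence $\mf H$ and $\mf H_y$ cannot both lie in each other's orbit closures unless they share a common $\Ad(G)$-orbit, giving $\mf H_y=\Ad(g_y)\mf H$ for some $g_y\in G$ uniquely determined modulo $N(H)$. The convergence in the Grassmanian then translates, via the homeomorphism onto the locally closed orbit, into $f(n_k,x)\cdot N(H)\to g_y\cdot N(H)$ in $G/N(H)$. Consequently $C^*$ projects onto $\mc D(f)$ with singleton fibres $C^*_y=\{g_y\cdot N(H)\}$. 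From here compactness and minimality follow exactly as in Proposition \ref{p:compact orbits mod stab}: Lemma \ref{l:surjective} supplies a compact $K\subseteq G/N(H)$ with $(X\times K)\cap C^*$ projecting onto $X$, but the single-point fibres on the dense $G_\delta$ set $\mc D(f)$ then sit inside $K$, forcing $C^*\subseteq X\times K$; minimality is automatic from minimality of $T$ together with the singleton $C^*_x=\{N(H)\}$. The final conjugacy statement $E_y^0 = g_y H g_y^{-1}$ for $y\in\mc D(f)$ is just $\mf H_y = \Ad(g_y)\mf H$ read back at the group level.

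The main technical obstacle is the verification, under each of the three listed conditions, of the locally closedness of $\Ad(G)$-orbits in the Grassmanian. In case (i) this is immediate from \cite[Corollary 3.2.12]{Zimmer} applied to $\Ad(G)$ itself, exactly as in the proof of Proposition \ref{p:compact orbits mod stab}; in cases (ii) and (iii) one has to invoke Dani's theorem, via the correspondence between connected Lie subalgebras of $\mf G$ and connected closed subgroups of $G$ on which \cite[Theorem 3.2]{Dani} is formulated. Once this locally closedness is secured, the rest of the proof is a direct transcription of the argument for Proposition \ref{p:compact orbits mod stab}.
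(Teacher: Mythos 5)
Your overall architecture is the right one and is the same as the paper's: repeat the two--sided approximation argument of Proposition \ref{p:compact orbits mod stab}, conclude that $\mf H_y$ and $\mf H_x$ lie in each other's orbit closures, and then use local closedness of the orbit to force them into the \emph{same} orbit and to upgrade convergence in the orbit to convergence in $G/N(H)$. The gap is in where you locate the local closedness. You run the whole argument on the Grassmanian $\mc H(\mf G)$ and assert that under hypotheses (ii) and (iii) the $\Ad(G)$-orbit of $\mf H$ is locally closed there, ``via the correspondence between connected Lie subalgebras and connected closed subgroups.'' But \cite[Theorem 3.2]{Dani} is a statement about the conjugacy class $H^G$ being locally closed in the space $\mc C(G)$ of \emph{closed subgroups} with the Fell topology; it says nothing directly about orbits on the Grassmanian. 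The two topologies do not correspond under $\mf H\mapsto H$: Fell convergence of closed connected subgroups is not equivalent to Grassmanian convergence of their Lie algebras (elements of $g_kHg_k^{-1}$ far from the identity can accumulate outside the limit group even when $\Ad(g_k)\mf H$ converges), so local closedness in one space does not automatically transfer to the other. Moreover, under (ii) and (iii) the group $\Ad(G)$ need not be almost algebraic, nor even closed in $\GL(\mf G)$, so neither \cite[Corollary 3.2.12]{Zimmer} nor your identification of $G/N(H)$ with $\Ad(G)/I_{\Ad(G)}(\mf H)$ as topological homogeneous spaces is available off the shelf. Only case (i) goes through as you wrote it.

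The paper avoids this entirely by changing the ambient space rather than the group: it considers the conjugation action of the cocycle on $\mc C(G)$ itself, where Dani's theorem applies verbatim and gives that $g\cdot N(H)\mapsto g\cdot H\cdot g^{-1}$ is a homeomorphism of $G/N(H)$ onto $H^G$. The approximation argument is then carried out with Fell limits of the closed groups $f(n_k,x)\cdot H\cdot f(n_k,x)^{-1}$ instead of Grassmanian limits of subalgebras; the only new ingredient needed is the semi-continuity of dimension under Fell limits ($g_k\cdot H\cdot g_k^{-1}\to H'$ implies $\dim H'\geq\dim H$), which replaces the ``same dimension'' bookkeeping you do on the Grassmanian. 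Everything after that point in your write-up --- the use of Lemma \ref{l:surjective}, the compactness and minimality of $C^*$, and reading off $E_y^0=g_yHg_y^{-1}$ on $\mc D(f)$ --- is correct and matches the paper. To repair your version you would either have to prove local closedness of the $\Ad(G)$-orbit on $\mc H(\mf G)$ under (ii) and (iii) separately, or simply transplant the argument into $\mc C(G)$ as the paper does.
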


\begin{rem}
There are several criterions given in \cite[Proposition 3.4]{Dani} which guarantee that the group $\Ad(G)$ itself is almost algebraic, for example if $G$ is an almost algebraic subgroup of $\GL(n,\R)$ for $n\geq 2$ or $G$ is semi-simple, among others.
\end{rem}

\begin{proof}[Proof of Proposition \ref{p:compact orbits mod normaliser}]
By \cite[Theorem 3.2]{Dani}, if one of the three conditions is satisfied the conjugacy class $H^G=\{g\cdot H\cdot g^{-1}: g\in G\}$ is locally closed in the space $\mc C(G)$ of closed subgroups and therefore the map 
\[
G/N(H)\longrightarrow H^G,\quad g\cdot N(H)\mapsto g\cdot H \cdot g^{-1} 
\]
is a homeomorphism.
Using this fact - considering the adjoint action of $f$ on $\mc C(G)$ rather than on the Grassmanian $\mc H(G)$ - we conclude in the same manner\footnote{the only detail which has to be considered additionally is the semi-continuity of  dimension: if $g_k \cdot H \cdot g_k^{-1}\rightarrow H'$ with respect to the Fell topology, then $\dim H'\geq \dim H$ .
} as in the proof of Proposition \ref{p:compact orbits mod stab} that the orbit closure of $C^* = \overline{T_f^\Z \left(x,N(H)\right)}$ in $X\times G/N(H)$ is minimal, compact and projects injectively onto the comeager set $\mc D(f)$.
Furthermore, these properties of $C^*$ immediately imply the assertion on the identity components $E_y^0$, cf. also Remark \ref{r:criterions}.
\end{proof}

Proposition \ref{p:compact orbits mod normaliser} together with Proposition \ref{p:regular orbit closure} yields also an alternative proof of the regularity result \cite[Theorem 4.9]{nilpotent}.

\begin{cor}[\cite{nilpotent}, Theorem 4.9]
Let $T$ be a minimal rotation on a locally connected compact group $X$, and $G$ a connected nilpotent Lie group.
If $f$ is a continuous and recurrent cocycle with values in $G$ then $f$ is strongly regular.
Furthermore, all $E_x$ are conjugate and $E_x/E_x^0$ is compact.
\end{cor}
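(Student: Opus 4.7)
The plan is to combine Propositions \ref{p:compact orbits mod normaliser} and \ref{p:regular orbit closure}. The first observation is that condition (ii) of Proposition \ref{p:compact orbits mod normaliser} is automatic for a connected nilpotent Lie group $G$: every $\ad(g)$ acts nilpotently on $\mf G$, so $\Ad(g)=\exp(\ad(g))$ has all eigenvalues equal to $1$, in particular real, for every $g\in G$. (Alternatively, $\Ad(G)$ is itself algebraic, so (i) would do as well.)

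Pick any $x_0\in\mc D(f)$ and set $H=E_{x_0}^0$. Proposition \ref{p:compact orbits mod normaliser} then delivers a compact, minimal orbit closure $C^*=\overline{T_f^\Z(x_0,N(H))}$ in $X\times G/N(H)$ which projects almost one-to-one onto $X$ and injectively onto $\mc D(f)$, and which guarantees that $E_y^0$ is conjugate to $H$ for every $y\in\mc D(f)$. To upgrade almost one-to-one to a continuous section defined on all of $X$, I would invoke the positive answer to Open Question \ref{no hairs} in the nilpotent case: in any nilpotent $G$ one has $e\notin\overline{H'gH'}$ for every closed subgroup $H'$ and every $g\notin H'$, as recalled after that question. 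Applied to the stabiliser subgroup of $C^*$, this forces every vertical section $C^*_y$ to be a single coset, so $y\mapsto g_y\cdot N(H)$ is continuous on $X$ and $H_y:=g_y H g_y^{-1}$ is a consistent selection that agrees with $E_y^0$ on $\mc D(f)$.

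In order to feed this selection into Proposition \ref{p:regular orbit closure}, it remains to produce a cutting neighbourhood of $H$ in $E_{x_0}$. Since $E_{x_0}$ is a closed subgroup of the Lie group $G$, it is itself a Lie subgroup and $H$ is open in $E_{x_0}$; pick an open $V\ni e$ in $G$ with $V\cap E_{x_0}\subseteq H$ and a relatively compact open $U\ni e$ with $\overline U\subseteq V$. Then $\overline U\cap E_{x_0}\subseteq H$, hence $\overline U H\cap E_{x_0}=H\subseteq UH$, so $U$ is a cutting neighbourhood. Proposition \ref{p:regular orbit closure} now yields strong regularity of $f$ and tells us that $H=E_{x_0}^0$ is cocompact and normal in $E_{x_0}$.

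The two trailing claims are then bookkeeping from Section \ref{s:basic facts}: a strongly regular orbit closure has a well-defined stabiliser subgroup $H^\ast=\{g:R_g(C)=C\}$ and $E_x=g_x\cdot H^\ast\cdot g_x^{-1}$ for every $x\in X$, so all essential ranges are mutually conjugate; compactness of $E_x/E_x^0$ then follows from the cocompactness clause at $x_0$ together with the homeomorphism $E_x/E_x^0\cong E_{x_0}/E_{x_0}^0$ induced by any conjugating element. The only substantive step is the appeal to the nilpotent case of Open Question \ref{no hairs}; I expect that appeal to be the main point where the argument could not be pushed to general connected Lie groups.
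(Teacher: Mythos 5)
Your argument follows the paper's own proof: verify condition (ii) of Proposition \ref{p:compact orbits mod normaliser} for nilpotent $G$, use the positive answer to Open Question \ref{no hairs} in the nilpotent case to upgrade the almost one-to-one extension $C^*$ to a genuine one-to-one extension and hence a consistent selection, and then feed this into Proposition \ref{p:regular orbit closure}. Your explicit construction of a cutting neighbourhood (using that $E_{x_0}^0$ is open in the closed Lie subgroup $E_{x_0}$) and the final conjugacy bookkeeping correctly fill in details the paper leaves implicit, but the route is the same.
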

\begin{proof}
Every $\Ad(g)$ has real eigenvalues only (actually, all eigenvalues are equal to one) and satisfies condition (ii) from \cite[Theorem 3.2]{Dani} listed in \ref{p:compact orbits mod normaliser}. Thus the orbit closure $C^*$ of $\big(x,N(H)\big)$ is compact and projects onto $X$, whereas it projects injectively onto the set $\mc D(f)$.
As mentioned above $C^*$ must be a one-to-one extension of $X$ and so $H_{T^n x}= f(n,x)\cdot H \cdot f(n,x)^{-1}$ extends to a consistent selection of conjugate subgroups.
Now Proposition \ref{p:regular orbit closure} yields the assertion of the corollary.
\end{proof}

Another consequence of Proposition \ref{p:compact orbits mod normaliser} is
the following partial result on regularity which holds even for an arbitrary minimal compact system $(X,T)$.
\begin{cor}
Suppose that $G$ is a connected Lie-group which fulfills one of the properties listed in Proposition \ref{p:compact orbits mod normaliser}.
If for some point $x\in \mc D(f)$, the identity component $H=E_x^0$ equals its own normaliser in $G$, then the $T_f$-orbit closure of $(x,e)$ is surjective and hence $f$ is regular.
\end{cor}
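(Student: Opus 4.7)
The plan is to exploit the hypothesis $N(H) = H$ to reduce to a setting where Proposition \ref{p:compact orbits mod normaliser} applies cleanly, and then to lift the resulting surjectivity from the quotient $X \times G/H$ back up to $X \times G$. My first observation is that, since $x \in \mc D(f)$, the essential range $E_x$ is a closed subgroup, so its identity component $H = E_x^0$ is normal in $E_x$; combined with $N(H) = H$ this forces $E_x \subseteq N(H) = H$, hence $E_x = H$. With this in hand I would apply Proposition \ref{p:compact orbits mod normaliser} directly: since $N(H) = H$, it yields a compact minimal orbit closure
\[
C^* = \overline{T_f^\Z (x, H)} \subseteq X \times G/H
\]
which projects onto all of $X$.

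The remaining task is to show that the corresponding orbit closure $C = \overline{T_f^\Z (x, e)} \subseteq X \times G$ itself projects onto $X$, and for this I plan to combine two pieces of information. On one hand, $x \in \mc D(f)$ and $E_x = P_x = H$ give $(x, h) \in C$ for every $h \in H$, so by $T_f$-invariance $C$ contains the full coset $\{T^n x\} \times f(n, x) H$ for each $n \in \Z$. On the other hand, for an arbitrary $y \in X$ the surjectivity of $C^*$ supplies a sequence $n_k$ with $T^{n_k} x \to y$ and $f(n_k, x) H \to g H$ in $G/H$; using that the principal $H$-bundle $G \to G/H$ admits continuous local sections, one can select $h_k \in H$ so that $f(n_k, x) h_k \to g$ in $G$. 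The sequence $(T^{n_k} x, f(n_k, x) h_k)$ then lies in $C$ and converges to $(y, g)$, giving $(y, g) \in C$.

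The step I expect to be the main obstacle, or at least the one that needs most care, is this final lifting from $G/H$ back to $G$: a priori, convergence of cosets $f(n_k, x) H$ in $G/H$ does not entail convergence of the $f(n_k, x)$ themselves in $G$. What makes the argument go through is that the cosetwise structure of $C$, supplied by $x \in \mc D(f)$, allows us to freely adjust each $f(n_k, x)$ within its $H$-coset, and local triviality of $G \to G/H$ then ensures that a suitable such adjustment converges in $G$. Once surjectivity of $C$ is established, regularity of $f$ follows by the definition given in Section \ref{s:basic facts}.
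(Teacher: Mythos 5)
Your proposal is correct and follows the same route as the paper: the paper's own proof is the one-line observation that the claim is ``evident from Proposition \ref{p:compact orbits mod normaliser} since $N(H)=H\subseteq P_x$'', and your argument simply spells out why this is evident, namely that $H\subseteq P_x$ puts $\{x\}\times H$ inside the orbit closure of $(x,e)$, and the surjectivity of $C^*$ in $X\times G/N(H)=X\times G/H$ then lifts to $X\times G$ via local sections of $G\to G/H$. Your preliminary remark that $E_x=H$ (from normality of the identity component in $E_x$) is correct, though only the inclusion $H\subseteq E_x=P_x$ is actually needed.
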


\begin{proof}
The assertion of the corollary is evident from Proposition \ref{p:compact orbits mod normaliser}, since for every $x\in\mc D(f)$ we have $N(H)=H\subseteq P_x$.
\end{proof}

\section{Regularity results for $\R^d\rtimes \R$}

Let $\R$ act continuously by linear automorphisms $A_u$ ($u\in \R$) on $\R^{d}$,
and $G$ be the semi-direct product $G= \R^d \rtimes \R$ defined by the group operation
\[
(v_1, u_1) \cdot (v_2, u_2) = \left(v_1 + A_{u_1}(v_2), u_1+u_2 \right).
\]
With this definition the sets
\[
U= \{e\}\times \R \quad\text{and}\quad N=\R^d\times \{e\}
\]
are subgroups of $G$, with $N$ normal in $G$, and conjugation by $u$ in $U$ equals the automorphism $A_u$ on $N$.
Let
\[
\pi: G=\R^d\rtimes \R \longrightarrow \R
\]
denote the projection of $G$ onto its second coordinate, and denote by
\[
\pi(f) (n,x) = \pi\big(f(n,x)\big)
\]
the factor cocycle with values in $\R$.

In spite of Question \ref{no hairs} remains open even for this special group, we are able to prove the existence of surjective orbit closures as the following theorem shows.
Its proof involves a direct computation of compactness of the cocycle modulo the normaliser $N(H)$ of
the identity component $H= E_x^0$, and uses the simple group structure to reduce to the case where $H$ equals its own stabiliser.

\begin{theo}\label{t:semidirect product}
Let $f$ be a continuous and recurrent cocycle over a minimal rotation on a locally connected compact group $X$ with values in the semi-direct product $G= \R^d \rtimes \R$.
If the action of $\R$ on $\R^d$ has no eigenvalue equal to one\footnote{by which we mean that for every $u\in\R$ the transformation $A_u$ has no eigenvalue equal to one}, then $f$ is regular.
\end{theo}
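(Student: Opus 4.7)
The plan is to use the projection $\pi : G \to \R$ to reduce to situations handled by the machinery of Section~\ref{s:Lie groups}. Since $\pi(f)$ is a continuous recurrent $\R$-valued cocycle over a minimal rotation on a locally connected compact group, it is regular by the classical abelian theory of Atkinson and Lema\'nczyk--Mentzen; let $\Gamma \subseteq \R$ denote its essential range. Moreover $G = \R^d \rtimes \R$ embeds as a closed algebraic subgroup of $\GL(d+1,\R)$, so $\Ad(G)$ is almost algebraic and condition~(i) of Proposition~\ref{p:compact orbits mod normaliser} is in force.

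If $\Gamma = \{0\}$, then $\pi(f)$ is a continuous coboundary, say $\pi(f) = \beta\circ T - \beta$ for some $\beta: X \to \R$. The boundary $b(x):=(0,-\beta(x)) \in G$ transforms $f$ into the cohomologous cocycle $\tilde f(x) = b(Tx)\,f(x)\,b(x)^{-1}$ which, by a direct computation, takes its values in the normal subgroup $\R^d \times\{0\}$. Being an $\R^d$-valued recurrent cocycle, $\tilde f$ is strongly regular by the classical abelian theory, and hence so is $f$ by the remark at the end of Section~\ref{s:basic facts}.

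If on the other hand $\Gamma \neq \{0\}$, pick $x_0 \in \mc D(f)$ and set $H := E_{x_0}^0$. The decisive claim is that $\pi(H) = \R$. Granted this, the Lie algebra of $H$ has the form $\mf h = V \oplus \R\cdot(v_0,1)$ for some $B$-invariant subspace $V \subseteq \R^d$, where $B$ is the infinitesimal generator of $\{A_u\}$. A direct calculation from the conjugation formula $(w,t)(v,s)(w,t)^{-1} = ((I-A_s)w + A_t v,\,s)$ shows that $(w,t) \in N(H)$ iff $A_t V = V$ (automatic since $V$ is $B$-invariant) and $(I-A_1)w + A_t v_0 - v_0 \in V$; the no-eigenvalue-one hypothesis makes $I-A_1$ invertible on $\R^d/V$, so $w$ is uniquely determined modulo $V$ for every $t \in \R$. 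Thus $N(H)$ is a connected Lie subgroup of the same dimension as $H$, whence $N(H)=H$, and the Corollary following Proposition~\ref{p:compact orbits mod normaliser} immediately yields regularity of $f$.

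The main obstacle is establishing the claim $\pi(H) = \R$. My approach is to exploit invertibility of $I-A_\gamma$ for $\gamma \in \Gamma \setminus \{0\}$: whenever $f(n_k, y_k) = (v_k, t_k)$ is a recurrence sequence with $t_k \to \gamma$, setting $h_k := (A_{t_k}-I)^{-1}v_k$ writes $f(n_k, y_k) = (h_k,0)^{-1}(0,t_k)(h_k,0)$, a normal form through which the unbounded $\R^d$-part can be absorbed along the orbit modulo $N(H)$, yielding $\Gamma \subseteq \pi(E_{x_0})$. Since $E_{x_0}$ is a closed subgroup of a Lie group and therefore has only countably many connected components, whereas $\pi(H)$ is a connected subgroup of $\R$, the non-triviality of $\Gamma$ forces $\pi(H) = \R$ --- with the caveat that the discrete subcase $\Gamma = c\Z$ requires an additional cohomological step that first pushes $f$ into the subgroup $\R^d \rtimes c\Z$ before the same reasoning can be applied.
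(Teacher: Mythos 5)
Your reduction in the case $\Gamma=\{0\}$ is fine, but the case $\Gamma\neq\{0\}$ — which is where all the difficulty of the theorem sits — has three genuine gaps. First, the ``decisive claim'' $\pi(H)=\R$ is not established and is in fact false in general. The inclusion $\Gamma\subseteq\pi(E_{x_0})$ does not follow from your normal form $f(n_k,y_k)=(h_k,0)^{-1}(0,t_k)(h_k,0)$: when the conjugators $h_k=(A_{t_k}-I)^{-1}v_k$ are unbounded, the values $f(n_k,y_k)$ contribute nothing to $E_{x_0}$, and one only knows $\pi(E_{x_0})\subseteq E_{x_0}(\pi(f))=\Gamma$, not the converse. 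Your proposed ``absorption modulo $N(H)$'' is moreover circular, since $N(H)$ is only computed \emph{after} the claim is granted. The genuine possibility that $E_{x_0}^0\subseteq N=\R^d\times\{0\}$ while $\Gamma\neq\{0\}$ (the $\R^d$-parts of the recurrence sequences escaping to infinity) is exactly the case the paper's proof must and does treat separately: there one builds a consistent selection $\{N_y\}$ of the groups $E_y\cap N$ out of the regular orbit closure of the \emph{abelian factor}, proves by a Lie-algebra boundedness argument (using $\Ad(v)=1+\ad(\mf v)$ on $N$) that the cocycle stays compact modulo $N(H)^0\cap N$, and in the subcase $H\subseteq N$ invokes a cutting neighbourhood and Proposition~\ref{p:generalised Atkinson}. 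Your caveat about $\Gamma=c\Z$ does not repair this: after pushing $f$ into $\R^d\rtimes c\Z$ the identity component of the essential range lies in $N$, so the subsequent computation $N(H)=H$ simply does not apply there.

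Second, even where $\pi(H)=\R$ does hold and your computation $N(H)=H$ is correct, the appeal to the corollary of Proposition~\ref{p:compact orbits mod normaliser} is not justified. Your claim that $G$ embeds as a closed \emph{algebraic} subgroup of $\GL(d+1,\R)$, hence that $\Ad(G)$ is almost algebraic, fails in general: the one-parameter group $\{A_u\}=\{e^{uB}\}$ need not have finite index in its Zariski closure (take $A_u=\mathrm{diag}(e^u,e^{\sqrt2\,u})$, whose Zariski closure is two-dimensional). Condition (ii) of Proposition~\ref{p:compact orbits mod normaliser} also fails in general, since the no-eigenvalue-one hypothesis permits $B$ to have non-real eigenvalues with non-zero real part (e.g.\ $1\pm i$), in which case $\Ad(w,t)$ has non-real eigenvalues; and $H$ is not compact. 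So none of Dani's three conditions is verified, and local closedness of the conjugacy class $H^G$ — which is what the corollary rests on — would have to be proved directly. This is presumably why the paper's proof avoids Proposition~\ref{p:compact orbits mod normaliser} entirely and instead carries out the compactness estimate modulo $N(H)^0\cap N$ by hand before feeding the result into Proposition~\ref{p:generalised Atkinson}.
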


\begin{rem}\label{r:action}
Assuming the action of $\R$ has no eigenvalue equal to one implies (but is not equivalent to) the following local property: 
Let $\mf G$ the Lie algebra of $G$ and $\mf N$ be the subalgebra which corresponds to the normal abelian kernel $N$.
Then for every vector $\mf h$ which is not contained in the ideal $\mf N$,
\begin{equation*}\label{e:lie bracket}
\left[\mf h, \mf n\right] = \ad(\mf h,\mf n) \neq 0,
\end{equation*}
for all $\mf n$ in $\mf N$.
\end{rem}


\begin{proof}
\emph{Step 1.} Let $x$ be any point from our non-meager set
$\mc D= \mc D(f)\cap\mc D\big(\pi (f)\big)$,
and let $S$ be the essential range of the projected cocycle $\pi(f)$ at the
point $x$. Then the inverse image
\[
A=\pi^{-1}\big(\overline{T_{\pi(f)}^\Z (x,e)}\big)
\]
of the regular orbit closure of $(x,e)$ with respect of the projected cocycle is regular in the sense that every slice $A_y=\{g\in G: (x,g) \in A\}$ of $A$ consists of  a single coset $g_y\cdot \pi^{-1}(S)$, and further the map
\[
X\longrightarrow G/\pi^{-1}(S),  \quad y\mapsto A_y = g_y \cdot \pi^{-1}(S),
\]
is continuous.
For every $g$ in $\pi^{-1}(S)$ we can find a sequence $\{n_k\}_{k\geq 1}$ and $v_k\in N$ such that $T^{n_k}x \rightarrow x$ and $f(n_k,x)\cdot v_k \rightarrow g$.
Thus
\[
\begin{aligned}
E_{T^{n_k} x}\cap N &=  f(n_k,x) \cdot (E_x\cap N)\cdot f(n_k,x)^{-1} = \\ &=
 f(n_k,x)\cdot v_k \cdot (E_x\cap N)\cdot v_k^{-1} \cdot f(n_k,x)^{-1}
\end{aligned}
\]
since $N$ is abelian; by letting $k\rightarrow\infty$ it follows that
\[
E_x\cap N\supseteq g \cdot(E_x\cap N)\cdot g^{-1}.
\]
Thus $\pi^{-1}(S)$ is contained in the normaliser $N(E_x\cap N)$ and the map $y\mapsto g_y\cdot N(E_x\cap N)$ is continuous.
Use this map to define a consistent selection $\{N_y\}_{y\in X}$ of subgroups conjugate to $N_x=
E_x\cap N$ by setting
\[
N_y = g_y\cdot(E_x\cap N)\cdot g_y^{-1}.
\]
It is important to note that by symmetry\footnote{we could have started with any other $y$ in $\mc D$.} $N_y= E_y\cap N$ for all $y$ from our comeager set $\mc D$.

-------------------------------

\emph{Step 2.} We let $H=E_x^0$ be the identity component of $E_x$ and $\hat H=N(H)^0$ the identity component of the normaliser $N(H)$ and claim that
\[
C= \overline{T_f^\Z \big(x, \hat H\cap N\big)}
\]
projects onto the whole space $X$.
Let $y$ be any point in $\mc D$ and choose $T^{n_k}x$ converging to $y$ so that $f(n_k,x)= g_k\cdot v_k$, with $g_k\rightarrow g$ and $v_k$ in the kernel $N=\ker(\pi)$.
We denote by $\mf H$ and $\hat{\mf H}$ the subalgebras that correspond to $H$ and $\hat H$.
The conjugate subgroups
\[
E_{T^{n_k}x}= f(n_k,x)\cdot H \cdot f(n_k,x)^{-1} = g_k\cdot v_k  \cdot H \cdot v_k^{-1}\cdot g_k^{-1}
\]
correspond to the subalgebras
\[
\Ad\left(f(n_k,x)\right) \mf H = \Ad(g_k) \Ad(v_k) \mf H.
\]

For any $\mf v$ from the subalgebra $\mf N$ corresponding to $N$ we know
that $\ad(\mf v) (\;\cdot\;) \in \mf N$, since $N$ is normal. Besides
$\ad(\mf v) (\;\cdot\;)= 0$ on $\mf N$ as $N$ is abelian. Thus $\ad(\mf
v)^j = 0$ for all $j\geq 2$ and one can calculate
\[
\Ad(v) = \exp\left(\ad(\mf v)\right) (\mf h)= \sum_{k=0}^\infty \frac{1}{k!} \ad(\mf v)^k = 1 + \ad(\mf v),
\]
where $v= \exp(\mf v)$.
This implies that
\[
\Ad(f(n_k,x)) \mf H
= \Ad(g_k)( 1 + \underbrace{\left[\mf v_k, \;\cdot\; \right]}_{\in \mf N}) \mf H,
\]
with any choice of $\mf v_k\in\mf N$ such that $v_k=\exp(\mf v_k)$.
Note that since $\mf H \cap \mf N = \mf N_x$, where $\mf N_x$ is the subalgebra that corresponds to $N_x$, we have
\[
\hat{\mf H}\cap\mf N = \big\{ \mf v\in\mf N : \left[\mf v, \mf H\right] \subseteq \mf N_x \big\}.
\]
Assume for a moment that the $\mf v_k + (\hat{\mf H}\cap \mf N)$ are unbounded in $\mf N/(\hat{\mf H}\cap \mf N)$.
Then we can find\footnote{
choose any linear functional $\Lambda:\mf N \longrightarrow \R$ such that $\ker \Lambda =\mf N_x$. Then every $\mf v$ in $\mf N$ defines a linear functional $\Lambda_{\mf v}$ on $\mf H$ by putting $\Lambda_{\mf v}(\mf h)=\Lambda\big(\left[\mf v, \mf h\right]\big)$.
Then $\hat{\mf H}\cap\mf N$ is the kernel of the linear map $\mf v\mapsto\Lambda_{\mf v}$.
As $\mf H$ is finite-dimensional boundedness of the $\mf v_k$ modulo $\hat{\mf H}\cap\mf N$ is equivalent to
boundedness of  the $\Lambda_{\mf v_k}(\mf h)$ for every $\mf h$ in $\mf H$.
}
a vector $\mf h$ in $\mf H$ such that along some subsequence
\[
\left[ \mf v_k, \mf h \right] + \mf N_x \rightarrow\infty
\]
in the quotient space $\mf N/\mf N_x$.
This implies that the one-dimensional spaces
\[
\big(1 + \left[\mf v_k, \cdot\; \right]\big)\big(\langle\mf h\rangle\big)
\]
converge to some one-dimensional space $\langle \mf h'\rangle$ contained in $\mf N$ but not in $\mf N_x$.
As the $\{N_y\}_{y\in X}$ form a consistent selection,
\[
\Ad(g)(\mf N_x)  = \lim_k \Ad(g_k) (\mf N_x) = \lim _k \Ad\big(f(n,x)\big)(\mf N_x) = \mf N_y,
\]
and the subspaces
\[
\Ad(g_k) \big(1 + \left[\mf v_k, \cdot\; \right]\big)\big(\langle\mf h\rangle\big)
\]
converge to the one-dimensional subspace $\Ad(g)\big(\langle\mf h'\rangle\big)$ which is contained in $\mf N$ but not in $\mf N_y$.
By semi-continuity of the essential ranges the immersed subgroup corresponding to this one-dimensional subspace is contained in $E_y$ but not in $N_y$ (as in the proof of Proposition \ref{p:compact orbits mod stab}, this follows easily from the fact that the exponential mapping is a local diffeomorphism). 
We therefore contradict the fact that $N_y= E_y\cap N$.
Thus the $\mf v_k + (\hat{\mf H} \cap \mf N)$ stay in some compactum and the same is true for the
$v_k\cdot (\hat H\cap N)$.
This proves that the $T_f$-orbit closure modulo $\hat H\cap N$ projects onto $\mc D$ and Lemma \ref{l:surjective} shows that it projects onto the whole space $X$.

------------------------------

\emph{Step 3.}
Now, we distinguish two cases:
If $H$ is contained in the normal subgroup $N$, then $N_x = H = E_x^0$ and there exists a cutting neigbourhood for $N_x$ in $E_x$.
Applying Proposition \ref{p:generalised Atkinson} yields the existence of surjective closures.

If $H$ is not contained in $N$, then there exist a $\mf h\in\mf H$ outside $\mf N$.
By Remark \ref{r:action} the linear transformation $[\mf h ,\;\cdot \;]$ maps $\mf N$ bijectively onto itself; and the same is true for the invariant subspace $\mf N_x$.
Thus for any $\mf  v\in\mf N$ outside $\mf N_x$ we must have also $[\mf h,\mf v]\notin\mf N_x$ and so $\mf v\notin \hat{\mf H}$.
Therefore $\hat H\cap N = H\cap N$ and Step 2 together with the fact that $H\subseteq P_x$ yields that the $T_f$-orbit closure of $(x,e)$ is surjective.
\end{proof}

\end{document}